\title[ ]{on the Kolmogorov theorem for some infinite-dimensional Hamiltonian systems of short range}
\author{ Yuan wu and Xiaoping Yuan}
\address{School of Mathematical Sciences, Fudan University, Shanghai 200433, P. R. China} \email{14110840003@fudan.edu.cn}
\address{School of Mathematical Sciences, Fudan University, Shanghai 200433, P. R. China}
\email{xpyuan@fudan.edu.cn}
\keywords{ Localization,\ invariant tori,\ Kolmogorov theorem,\ short range.}
\theoremstyle{plain}
\newtheorem{theorem}{Theorem}[section]
\newtheorem{corollary}[theorem]{Corollary}
\newtheorem{lemma}[theorem]{Lemma}
\theoremstyle{definition}
\newtheorem{remark}[theorem]{Remark}
 \numberwithin{equation}{section}
\begin{document}

%%% ----------------------------------------------------------------------

\begin{abstract}
In this paper,\ it is proved that the infinite KAM torus with prescribed frequency exists in a sufficiently small neighborhood of a given $ I^{0}$ for nearly integrable and analytic Hamiltonian system $  H(I,\theta) = H_{0}(I)+ \epsilon H_{1}(I,\theta)$ of infinite degree of freedom and of short range.\ That is to say, we will give an extension of the original Kolmogorov theorem to the infinite-dimensional case of short range.\ The proof is based on the approximation of finite-dimensional Kolmogorov theorem and an improved KAM machinery which works for the normal form depending on initial $ I^{0}$.
\end{abstract}
%%% ----------------------------------------------------------------------
\maketitle
%%% ----------------------------------------------------------------------
%%%%%%%%%%%%%%%%%%%%%%%%%%%%%%%%%%%%%%%%%%%%%%%%%%%%%%%%%%%%%%%%%%%%%%%%%%
%INTRODUCTION%%%%%%%%%%%%%%%%%%%%%%%%%%%%%%%%%%%%%%%%%%%%%%%%%%%%%%%%%%%%%
%%%%%%%%%%%%%%%%%%%%%%%%%%%%%%%%%%%%%%%%%%%%%%%%%%%%%%%%%%%%%%%%%%%%%%%%%%
\section{introduction and main results}
\subsection{Motivations}
Since Kolmogorov's work \cite{Kol1954On} in 1954,\ remarkable results have been obtained in perturbation theory of integrable Hamiltonian systems.\ Here we share some of the most important extensions based on the original Kolmogorov theorem for a better understanding of this perturbation theory.

We recall the fact that Kolmogorov \cite{Kol1954On} announced that most invariant tori for the integrable Hamiltonian systems persist under small perturbations.\ More concretely,\ consider the nearly integrable Hamiltonian system of $n$-freedom
\begin{eqnarray}
\label{e0} H(I,\theta) = H_{0}(I)+ \epsilon H_{1}(I,\theta),
\end{eqnarray}
with the symplectic structure $ dI\wedge d\theta$ on $ \mathbb{R}^{n}\times\mathbb{T}^{n}$ and the action-angle variables $ (I,\theta)$ belonging to some domain $ \mathcal{D}\times\mathbb{T}^{n}\subset \mathbb{R}^{n}\times\mathbb{T}^{n}$.\ If $ H_{0}(I) $ is analytic and satisfies the non-degenerate condition
\begin{eqnarray}
\label{e1}det(\partial^{2}H_{0}(I))\neq 0,\  I\in \mathcal{D},
\end{eqnarray}
Kolmogorov theorem claims that any invariant tori of the unperturbed $ H_{0}$ with prescribed Diophantine frequency $ \omega=\omega(I^{0})= \frac{\partial H_{0}(I)}{\partial I}|_{I=I^{0}}$ for some $ I^{0} \in \mathcal{D}$ persist under a small analytic perturbation $ \epsilon H_{1}(I,\theta)$.\ Actually,\ Kolmogorov also gave a precise outline of its proof which is based on a fast convergent Newton scheme.\ Roughly speaking,\ one can set up a Newton scheme by replacing $ H_{j}$ with $ H_{j+1} = H_{j} \circ \Phi_{j} $ after taking symplectic transformation $ \Phi_{j}$ such that the new Hamiltonian $ H_{j+1} $ is of super-exponential decay.\ Taking $j=1$ as an example,\ the map $\Phi_{1}$ is close to the identity and can be regarded as the composition of two ``elementary'' symplectic transformations: $\Phi_{1}= \Phi^{(1)}_{1}\circ\Phi^{(2)}_{1}$,\ where $ \Phi^{(2)}_{1}: (I',\theta')\rightarrow (\eta,\xi)$ is the symplectic map generated by  $ I'\cdot \xi + \epsilon I'\cdot a(\xi)$,\ while $ \Phi^{(1)}_{1}: (\eta,\xi)\rightarrow(I,\theta)$ is the angle-dependent translation generated by  $ \eta\cdot \theta + \epsilon (b\cdot\theta + s(\theta))$ with real-analytic functions $ a(\xi)$ and $ s(\theta)$ being of zero average on $\mathbb{T}^{n}$ and $ b \in \mathbb{R}^{n}$.\ Obviously,\ $ \Phi^{(2)}_{1} $ acts in ``angle direction'' and will be needed to straighten out the flow up to order $ \mathcal{O}(\epsilon^{2})$,\ while $ \Phi^{(1)}_{1}$ acts in ``action direction'' and will be needed to keep the frequency $ \omega(I^{0})$ of the torus fixed.\ Indeed,\ the vector $ b$ above and the non-degenerate condition (\ref{e1}) are sufficient to overcome the frequency drift(which is a key idea introduced by Kolmogorov).\ See
\cite{chierchia2009kol},\ for the details.\ Later,\ the rigorous proof was given by Arnold\ \cite{arnol1963proof} in the analytic category,\ while Moser\ \cite{Moser1962On} also proved it for the finitely differentiable exact symplectic mappings.\ This theorem now is called the classical KAM theorem.\ See \cite{broer2009quasi},\ and the references therein.

Naturally,\ it is hoped that the KAM theorem of finite-dimensional Hamiltonian systems can be extended to infinite-dimension ones.\ Different from the finite-dimensional case,\ the KAM theorem of infinite-dimensional Hamiltonian systems is generally wrong if it is assumed only that the perturbation is small and sufficiently smooth.\ Hence,\ two slightly special cases,\ Hamiltonian partial differential equations (PDEs) and the Hamiltonian systems defined on the infinite lattices, have been taken into account.\ On the one hand,\ the infinite-dimensional KAM theory has seen enormous progress with application to Hamiltonian PDEs since Kuksin \cite{kuksin1987ha} and Wayne \cite{Wayne1990Pe}.\ As an example to which infinite-dimensional KAM theory applies,\ consider the nonlinear Schr\"{o}dinger equation(NLS)
\begin{eqnarray}\label{1}
\sqrt{-1}u_{t}-\triangle u + V(x,\omega)u+ |u|^{2}u + {h}.{o}.{t}=0,
\end{eqnarray}
subject to Dirichlet condition.\ Kuksin \cite{Kuksin1993Nearly} showed that (\ref{1}) possesses lower dimensional invariant tori around $ u=0$ for ``most'' parameters $ \omega$.\ See \cite{be2018kam,bo2005green,chier2000kam,craig1993ne,
eliasson2010kam,el2016kam,fe2015quasi,liu2011kam,liu2010spe,kap2003kam,
ku1988pe,ku1996Inv,procesi2015kam,
P1996A,xue2018kam,yuan2018kam,zhang2011kam,Wayne1990Pe},\ for more related results.\ In those literatures,\ the frequency vectors $ \omega\in\mathbb{R}^{n} $ or initial data are regarded as the parameters.\
In \cite{El1988Pe},\ the frequency
\[
\omega = \omega_{0}t,
\]
where $  \omega_{0}\in\mathbb{R}^{n}$ is a fixed Diophantine vector and $ t\in\mathbb{R}$ is a parameter.\ Even for the finite-dimensional Hamiltonian,\ Bourgain \cite{bo1997mel} showed that,\ at least,\ a 1-dimensional parameter is needed to guarantee the existence of the  KAM torus with a fixed frequency,\ which is a progress of the results in \cite{El1988Pe,Melnikov1965ON}.\ Since the freedom of Hamiltonian PDEs is infinite and the dimension of the obtained KAM tori is finite,\ the KAM tori are called lower-dimensional tori.\
Note that the dimension of the invariant tori equals to the freedom of the Hamiltonian in the  Kolmogorov theorem.\ Thus,\ those lower-dimensional tori for Hamiltonian PDEs are not in the class of  Kolmogorov's tori in the rigorous sense.\ In order to make sure that the invariant tori are in the class of Kolmogorov's,\ the dimension of the obtained tori should be infinite.\ In this direction,\ It is proved in \cite{bo2005on,cong2018stability} that 1-dimensional NLS has a full dimensional KAM torus of prescribed frequencies with the actions of the tori decaying hyper-exponentially to zero by replacing the potential $ V(x,\omega)$ as infinite-dimensional parameter vectors or replacing  $ V(x,\omega) $ by random Fourier multiplier $ M $ ($ M$ is defined by $ \widehat{Mu}(n)= V_{n}\hat{u}(n)$ for $ n\in \mathbb{Z}$).\ Considering that the Hamiltonian does not involve any exterior parameters in the Kolmogorov theorem,\ the full-dimensional invariant tori in \cite{bo2005on,cong2018stability} are not yet in the class of Kolmogorov's.

On the other hand,\ let us consider models in mathematical physical which consist of lattices of harmonic oscillators with independent identically distributed random frequencies,\ subject to un-harmonic coupling force which are of finite range,\ short range,\ or hierarchy.\ Bellissard,\ Vittot\  \cite{vittot1985in},\ and
Fr\"{o}hlich,\ Spencer,\ Wayne \cite{J1986Local} showed that there is a set $ \Omega \subseteq \mathbb{R}^{\infty}_{+}$ with $Prob(\Omega) > 0 $ (where ``$Prob$" is some probability measure) such that for some $ \omega = (\omega_{i})_{i\in \mathbb{Z}} \in \Omega $,\ there exists an infinitely dimensional KAM-torus for the following Hamiltonian
\begin{eqnarray}\label{e0*}
H = H(I,\theta) = \underset{j\in\mathbb{Z}^{d}}{\sum}\omega_{j}I_{j} + \epsilon P(I,\theta),
\end{eqnarray}
where $ 0< \epsilon\ll 1, d\geq 1$  and $ P $ is of short range.\ Afterwards,\ different kinds of systems with short range have been deeply investigated by many authors.\ See \cite{CY2007A,P1990Small,Yuan2002Con,yuan2014kam},\ for example.\ It is easily observed that such infinite-dimensional Hamiltonian systems are well approximated by finite-dimensional ones and consequently the classical,\ finite-dimensional KAM technique works in this case.\ That is,\ these infinite invariant tori are obtained by successive small perturbations of finite-dimensional tori.\ Such results were obtained in \cite{chierchia1994max,P1990Small,P2002On} by a somewhat similar method.\ However,\ the frequencies of those full-dimensional KAM tori are not prescribed,\ so they are not in the class of Kolmogorov's.

In the present paper,\ we will construct a Kolmogorov theorem for the following infinite-dimensional  Hamiltonian system
\begin{eqnarray}\label{0**}
  H(I,\theta) = H_{0}(I)+ \epsilon H_{1}(I,\theta), (I,\theta) \in \mathbb{C}^{\mathbb{Z}}\times\mathbb{T}^{\mathbb{Z}} = \mathbb{C}^{\mathbb{Z}}\times (\mathbb{C}/2\pi{\mathbb{Z}})^{\mathbb{Z}}
\end{eqnarray}
provided that $H_{0}(I),H_{1}(I,\theta)$ are analytic and of short range.\ Thus,\ in order to construct a KAM torus of prescribed frequency,\ the key difficulty is to eliminate the drift of the frequency.
We will overcome the difficulty by advantage of the facts that the Hamilton system (\ref{0**}) is of short range and the perturbation is high orders of action variables $(I_{j})_{j\in \mathbb{Z}}$ taking an exponential norm with weight $ e^{|j|^{1+\alpha}}$ for any $ \alpha > 0$.\ In other words,\ the strength of the action variable $ I_{j}$ decays so fast that we can regard an infinite-dimensional Hamiltonian as a finite- dimensional one with the help of short range in each step of KAM iteration.\ For a finite-dimensional Hamiltonian,\ the Kolmogorov's original idea works well in $ k$-th KAM iteration by replacing the symplectic $ \Phi_{k}$ with a time-1-map $ X^{1}_{\tilde{F}_{k}}$ of a Hamiltonian vector field $X_{\tilde{F}_{k}}$,\ where $ \tilde{F}_{k}$ is of the form $ \tilde{F}_{k}={F}_{k}+ \langle b^{k},\theta\rangle$ with a Hamiltonian $ {F}_{k}$ to be determined.\ More exactly,\ one can choose a length scale ($\mathbb{L}^{k}$) such that we only need to consider the finite Hamiltonian $ H(\theta,I(k))$ in $ k$-th KAM iteration,\ where $ I(k) = (I_{j})_{|j|\leq \mathbb{L}^{k}-1}$.\ The trouble is estimating vector $ b^{k}$ whose dimensional number $ 2\mathbb{L}^{k}+1$ tends to $ \infty$ as $ k$ to $\infty$.\ Indeed,\ our work is mainly based on the combination of Kolmogorov's original idea \cite{Kol1954On} and that of Fr\"{o}hlich, Spencer, Wayne \cite{J1986Local}
and P\"{o}schel \cite{P1990Small}.\ The main aim of the present paper is to prove that there exist full dimensional KAM tori with the prescribed Diophantine frequency $ \omega= \omega(I^{0})= \frac{\partial H_{0}(I^{0}) }{\partial I} $ for the Hamiltonian (\ref{0**}) in the absence of exterior parameters.

\subsection{The main results}
To state our results we firstly introduce some notations.\ In this paper,\ the positive constant $\alpha$ is fixed. Note
\begin{eqnarray}
\nonumber \mathbb{C}^{\mathbb{Z}}:= \{I = (I_{j})_{j\in \mathbb{Z}}:I_{j}\in \mathbb{C}\},
\end{eqnarray}
where the norms on $ \mathbb{C}^{\mathbb{Z}}$ are defined by
\[
\|I\| := \sum_{j\in \mathbb{Z}}|I_{j}|\exp(|j|^{1+\alpha}),
|I|_{\infty}:= \sup_{j\in \mathbb{Z}}|I_{j}|.
\]

Fix $ I^{0}\in \mathbb{C}^{\mathbb{Z}}$ with $ 0 <\parallel I^{0} \parallel < 1 $.\ Given some $ s > 0 $ and $ r > 0$,\ we define domain
\[
\widetilde{\mathcal{D}}_{s,r}:=\{(I,\theta)\in \mathbb{C}^{\mathbb{Z}} \times \mathbb{T}^{\mathbb{Z}}: \| I-I^{0}\| < s, |Im \theta|_{\infty} < r\},
\]
where $ \mathbb{T}^{\mathbb{Z}}= (\mathbb{C}/2\pi{\mathbb{Z}})^{\mathbb{Z}}$,\ and a phase space
\[
\nonumber \mathcal{P}: \mathbb{C}^{\mathbb{Z}} \times \mathbb{T}^{\mathbb{Z}},
\]
with
\begin{eqnarray}
\nonumber |(I,\theta)|_{\mathcal{P}}= \max(\|I\|,|\theta|_{\infty}).
\end{eqnarray}
Given a sequence of length scales $ (\mathbb{L}^{k}) $.\ Let us consider a vector $ I(k) = (I_{j})_{|j|\leq \mathbb{L}^{k}-1}$ and a matrix $ B = (B_{ij})_{|i|,|j|\leq \mathbb{L}^{k}-1} $.\ We can expand $ I(k) $ into
\[
\widetilde{I} = (\widetilde{I}_{j} : j\in \mathbb{Z}),\ \mbox{here}\ \widetilde{I}_{j}= \begin{cases}
 {I}_{j}, |j|\leq \mathbb{L}^{k}-1\\
 0, |j|\geq \mathbb{L}^{k},
 \end{cases}
\]
and also expand $ B$ into
\[
\bar{B} = (\bar{B}_{ij} : i,j\in \mathbb{Z}),\ \mbox{here}\ \bar{B}_{ij}= \begin{cases}
 {B}_{ij}, |i|,|j|\leq \mathbb{L}^{k}-1\\
 0, |i|\  \mbox{or}\  |j|\geq \mathbb{L}^{k},
 \end{cases}
\]
Define $ \|I(k)\|= \|\widetilde{I}\| $.\ Similarly,\ define $ |||B||| = |||\bar{B}|||$,\ where $ |||\cdot|||$ is the operator norm reduced by $ \|\cdot\|$
from
$ \mathbb{C}^{\mathbb{Z}}$ to $\mathbb{C}^{\mathbb{Z}}$.\ That is,\ we can define
\[
|||B|||= ||| \bar{B}|||= \sup_{I,\|I\|\neq0}\frac{\|\bar{B} I\|}{\|I\|}.
\]

For a map $ G : \widetilde{\mathcal{D}}_{s,r}\rightarrow \mathbb{C}$,\ we define
\[
| G |_{s,r} := \sup_{(I,\theta)\in \widetilde{\mathcal{D}}_{s,r}} | G(I,\theta) |.
\]

Consider an infinite-dimensional Hamiltonian system
\numberwithin{equation}{section}
\begin{eqnarray}\label{maineq}
H(I,\theta)= H_{0}(I) + \epsilon H_{1}(I,\theta),\   (I,\theta)\in\widetilde{\mathcal{D}}_{s,r}
\end{eqnarray}
with the standard symplectic structure  $ d\theta \wedge dI $ on $ \mathbb{C}^{\mathbb{Z}}\times \mathbb{T}^{\mathbb{Z}}$.\ Assume the unperturbed Hamiltonian
\[
H_{0}(I)= \underset{\lfloor i,j\rceil}{\sum}h_{\lfloor i,j\rceil}(I_{i},I_{j}) = \underset{j\in \mathbb{Z}}{\sum}\underset{|i-j|\leq 1}{\sum}h_{( i,j)}(I)
\]
satisfies the following conditions:\\

$(\mathcal{A}_0)$ For any $ j\in \mathbb{Z} $,\ the function $ h_{\lfloor i,j\rceil} :  \tilde{\mathcal{D}}_{s,r}\rightarrow \mathbb{C}$ is real for real arguments,\ analytic in  variables $ (I,\theta) $;

$(\mathcal{A}_1)$ Given two sequences of positive numbers $ (\mathbb{L}^{k}) $ and $ (M^{k}) $.\ Denote $ \omega =(\omega_{j})_{j\in \mathbb{Z}}=(\omega_{j}(I^{0}))_{j\in \mathbb{Z}}= \frac{\partial H_{0}(I^{0}) }{\partial I}$.\ For all $ |j|\leq \mathbb{L}^{k}-1
$,\ denote $ \omega(k) = (\omega_{j})_{|j|\leq \mathbb{L}^{k}-1}$.\ Frequency $\omega $ satisfies the Diophantine conditions
\begin{eqnarray}
|\langle\omega(k),\nu\rangle| \geq (\epsilon^{(1+\beta)^{k-1}})^{\gamma}
\end{eqnarray}
%where $ \langle \cdot\rangle$ is inner product
for $ 0 < |v| < M^{k}, k\geq 1 $ and $ \beta> 0,\gamma> 0$;

$(\mathcal{A}_2)$ Given a sequence of positive numbers $ (\mathbb{L}^{k})$.\ Denote $ \Omega=\frac{\partial^{2}H_{0}(I^{0})}{\partial I^{2}} = (\Omega_{ij})_{i,j\in \mathbb{Z}}$.\
For any $ i,j\in \mathbb{Z}$,\ note $ \Omega(k) = (\Omega_{ij})_{|i|,|j|\leq \mathbb{L}^{k}-1}$.\ The operator $ \bar{\Omega}(k): \mathbb{C}^{\mathbb{Z}}\rightarrow\mathbb{C}^{\mathbb{Z}}$  satisfies
\[
|||\bar{\Omega}(k)|||\leq \kappa_{1},
\]
and its inverse operator $\bar{\Omega}^{-1}(k): \mathbb{C}^{\mathbb{Z}}\rightarrow\mathbb{C}^{\mathbb{Z}}$ satisfies
\[
|||\bar{\Omega}^{-1}(k)|||\leq \kappa_{2},
\]
for any $k\geq 1 $ and $ 0< \kappa_{1},\kappa_{2}< \infty$.\\
Also assume the perturbed Hamiltonian
 \[
 H_{1}= \underset{\lfloor i,j\rceil}{\sum}f_{\lfloor i,j\rceil}(I_{i},I_{j},\theta_{i},\theta_{j}) = \underset{j\in \mathbb{Z}}{\sum}\underset{|i-j|\leq 1}{\sum}f_{( i,j)}(I,\theta)
 \]
satisfies the following conditions:

$(\mathcal{B}_0)$ For any $ j\in \mathbb{Z} $,\ the function $ f_{\lfloor i,j\rceil} :  \tilde{\mathcal{D}}_{s,r}\rightarrow \mathbb{C}$ is real for real arguments,\ analytic in  variables $ (I,\theta) $;

$(\mathcal{B}_1)$ For any $ j\in \mathbb{Z} $,\ we have
\[
|f_{\lfloor i,j\rceil}(I,\theta)|\sim \mathcal{O}(|I_{i}|^{\iota_{i}}|I_{j}|^{\iota_{j}}),\ \mbox{for all}\  \lfloor i,j\rceil\  \mbox{and}\  \iota_{i}+\iota_{j}\geq 5,
\]
where $ \mathcal{O}(x)$ means higher order infinitesimal of $x$.\\

Our main result is as follows

\begin{theorem}\label{0*}
Suppose $ H = H_{0} + \epsilon H_{1}$ defined in (\ref{maineq}) satisfies $ (\mathcal{A}_0)-(\mathcal{A}_2)$ and $ (\mathcal{B}_0)-(\mathcal{B}_1)$.\ Then for any
$ 0 < \gamma< \frac{1}{301}$ and $ I^{0}\in\mathbb{C}^{\mathbb{Z}}$ with $ 0 < \|I^{0}\|< 1 $,\ there exists a constant $ \epsilon_{0}= \epsilon_{0}(s,r,I^{0},\gamma)> 0$ such that,\ for $ 0 < \epsilon \leq \epsilon_{0}$,\ there is a set $ \mathcal{R}^{\infty}(I^{0}) $,\ a measure $ \mu $ with $
\mu(\mathcal{R}^{\infty}) = 1-\sum^{\infty}_{j=0}\epsilon_{j}^{\kappa}$ ($ 0 < \kappa < \gamma $) and a real-analytic symplectic transformation
$\Psi_{*}:   \tilde{\mathcal{D}}_{s,r}\rightarrow \tilde{\mathcal{D}}_{\frac{1}{2}s,\frac{1}{2}r}$,\ such that for each $ \omega(I^{0}) $ in $ \mathcal{R}^{\infty} $
\begin{eqnarray}
 H\circ\Psi_{*} = e_{*}+ \langle\omega(I^{0}),I-I^{0}\rangle +\frac{1}{2}\langle\Omega_{*}(I-I^{0}),I-I^{0}\rangle + P_{*}(I-I^{0},\theta),
\end{eqnarray}
where $ |P_{*}| = \mathcal{O}(|I_{i}-I^{0}_{i}|^{\iota_{i}}|I_{j}-I^{0}_{j}|^{\iota_{j}}|I_{k}-I^{0}_{k}|^{\iota_{k}})$ for $ {\iota_{i}}+{\iota_{j}}+{\iota_{k}}\geq 3$.\\
Furthermore,\ one has
\begin{eqnarray}
|\Psi_{*}-id|_{\tilde{\mathcal{D}}_{\frac{1}{2}s,\frac{1}{2}r}}\lessdot \epsilon^{\frac{17}{50}},| e_{*}-e|\lessdot 2\epsilon,
\end{eqnarray}
and the operator $ \Omega_{*}: \mathbb{C}^{\mathbb{Z}}\rightarrow \mathbb{C}^{\mathbb{Z}} $ satisfies
\begin{eqnarray}
\nonumber|||\Omega_{*}-\Omega||| \lessdot \epsilon^{\frac{1}{5}}.
\end{eqnarray}
\end{theorem}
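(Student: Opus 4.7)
The strategy is a quadratically convergent KAM iteration in the style of Kolmogorov, but with two modifications dictated by the infinite-dimensional, short-range setting: at the $k$-th step one works only on the finite ``box'' of sites $|j| \leq \mathbb{L}^k - 1$ (as in P\"oschel \cite{P1990Small} and Fr\"ohlich--Spencer--Wayne \cite{J1986Local}), and one uses Kolmogorov's action-translation trick to pin the normal frequency at $\omega(I^0)$. Concretely, I would construct $\Psi_{*}=\lim_{k\to\infty}\Phi_{1}\circ\cdots\circ\Phi_{k}$ as a uniform limit of symplectic maps, where each $\Phi_{k+1}$ is the time-$1$ map of a Hamiltonian $\tilde F_k(I,\theta) = F_k(I,\theta) + \langle b^{k}, \theta\rangle$.

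Inductively, I would assume that after $k$ steps the Hamiltonian has the form
\[
H_k = e_k + \langle \omega(I^{0}), I - I^{0}\rangle + \tfrac{1}{2}\langle \Omega_k (I-I^{0}), I - I^{0}\rangle + P_k,
\]
with $|P_k|_{s_k, r_k} \leq \epsilon_k := \epsilon^{(1+\beta)^{k-1}}$, $P_k$ still of short range and inheriting the weight $e^{-|j|^{1+\alpha}}$ at site $j$. At step $k$ I would truncate $P_k$ to the box $|j|\leq \mathbb{L}^k-1$ and to Fourier modes $|\nu|\leq M^k$, tolerating an ultra-tail of size $\lessdot \exp(-c(\mathbb{L}^k)^{1+\alpha}) + \exp(-c M^k r_k)$, which by a suitable choice of $\mathbb{L}^k, M^k$ is dominated by $\epsilon_{k+1}$. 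The generating function $F_k$ is then obtained by solving the finite homological equation
\[
\langle \omega(k), \partial_\theta\rangle F_k = P_k^{(\leq 1)} - [P_k^{(\leq 1)}],
\]
where $P_k^{(\leq 1)}$ collects the terms of degree at most one in $I - I^{0}$ and $[\,\cdot\,]$ is the $\theta$-average; the Diophantine hypothesis $(\mathcal{A}_1)$ makes this solvable on the box, with a small-divisor loss of $\epsilon_k^{-\gamma}$. The drift of the prescribed frequency created by the $\theta$-averaged, linear-in-$I$ part of $P_k$ is cancelled by
\[
b^{k} := -\,\bar\Omega(k)^{-1}\,[\partial_I P_k](I^{0}),
\]
which is well-defined and bounded by $(\mathcal{A}_2)$; this is Kolmogorov's original trick for keeping $\omega(I^{0})$ rigid.

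A standard Cauchy-estimate computation on a shrinking sequence of domains $(s_k, r_k)\to(s/2, r/2)$ then gives $|P_{k+1}|_{s_{k+1}, r_{k+1}} \lessdot \epsilon_k^{\,2 - C\gamma}$, which for $\gamma < 1/301$ and a suitable small $\beta>0$ yields $\epsilon_{k+1}\leq \epsilon_k^{1+\beta}$, closing the induction. Excluding at each scale the frequencies violating $(\mathcal{A}_1)$ contributes a set of measure $\lessdot \epsilon_k^\kappa$, and summing produces $\mu(\mathcal{R}^{\infty}) = 1 - \sum_{j}\epsilon_j^\kappa$. The telescoping bounds $|\Psi_* - \mathrm{id}| \lessdot \epsilon^{17/50}$, $|e_* - e|\lessdot 2\epsilon$, and $|||\Omega_* - \Omega||| \lessdot \epsilon^{1/5}$ then come from summing $\sum_k$(one-step correction), the $k=1$ contribution being dominant; the order-$3$ vanishing of $P_*$ follows because at every step the degree-$\leq 1$-in-$(I-I^{0})$ part of the perturbation has been explicitly subtracted.

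The main obstacle is to keep the Kolmogorov translation vector $b^{k}$ and the solution $F_k$ under control in the exponentially weighted norm $\|\cdot\|$ as $k\to\infty$. The ambient dimension $2\mathbb{L}^k+1$ of $b^{k}$ diverges, and although $\bar\Omega(k)^{-1}$ is a bounded operator by $(\mathcal{A}_2)$, its input $[\partial_I P_k](I^{0})$ must retain the decay $e^{-|j|^{1+\alpha}}$ in its $j$-th coordinate to survive the weight in $\|\cdot\|$. This decay is not preserved automatically by a single KAM step: short-range interactions ``propagate'' support by one site per interaction, so after $k$ iterations the effective range has grown, but the super-linear growth of $|j|^{1+\alpha}$ in the weight absorbs the propagation. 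Verifying this balance quantitatively, and showing that $P_{k+1}$ still satisfies the short-range conditions $(\mathcal{B}_0)$--$(\mathcal{B}_1)$ so that the scheme iterates, is the technical heart of the argument; the explicit threshold $1/301$ in the theorem comes precisely from tracking these losses (small-divisor, domain shrinkage, cluster propagation) through the induction.
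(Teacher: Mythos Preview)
Your overall architecture---finite boxes, Kolmogorov action-translation, summing geometric losses---matches the paper, but there is a genuine gap in the homological step that breaks the conclusion you claim.

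You propose to solve the homological equation only for the part $P_k^{(\leq 1)}$ of degree $\leq 1$ in $\rho=I-I^0$, and then assert that ``the order-$3$ vanishing of $P_*$ follows because at every step the degree-$\leq 1$ part has been explicitly subtracted.'' This is incorrect: killing the degree-$\leq 1$ part leaves a $\theta$-dependent quadratic remainder $\langle P_k^2(\theta)\rho,\rho\rangle$ which neither goes away nor can be absorbed into $\Omega_*$ (only its $\theta$-average $[P_k^2]$ can). The theorem asserts $P_*=\mathcal{O}(\rho^3)$, so the scheme must also eliminate the degree-$2$ $\theta$-dependent part. The paper does exactly this: it takes $F_k=F_k^0+\langle F_k^1,\rho\rangle+\langle F_k^2\rho,\rho\rangle+\langle a^k,\theta\rangle$ and solves a cascaded system of three equations (see (\ref{**}) and (\ref{K**})). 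The third equation for $F_k^2$ is coupled to the first through an extra term $U_k$ with $\langle U_k\rho,\rho\rangle=\{\tilde V_k^3+\Gamma P_k^3+\epsilon\Gamma Q_k^3,\,F_k^0+\langle a^k,\theta\rangle\}$, which you do not account for; without it the new quadratic error does not close at order $\epsilon_k^{1+\beta}$.

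A second, related point: the convergence is not ``quadratic'' in the sense $\epsilon_{k+1}\lessdot\epsilon_k^{2-C\gamma}$. The box scale $\mathbb{L}^{k+1}=\big(\tfrac{1+\beta}{5}|\ln\epsilon_k|\big)^{1/(1+\alpha)}$ introduces a factor $e^{(\mathbb{L}^{k+1}-1)^{1+\alpha}}\sim\epsilon_k^{-(1+\beta)/5}$ every time a Poisson bracket is estimated (Lemma~\ref{N7}), so the quadratic-in-$F$ terms contribute roughly $(\epsilon_k^{4/5-3\gamma-\beta/5})^2$ at best; meanwhile the boundary and Fourier-tail terms contribute $\epsilon_k^{1+\beta}$, and the latter is what actually sets $\epsilon_{k+1}=\epsilon_k^{1+\beta}$ with $0<\beta<\tfrac{1}{10}$. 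This same $\epsilon_k^{-1/5}$-type loss is also why the control of $\|a^k\|$ requires splitting the source term $[P_k^1]+\epsilon[Q_k^1]$ into a piece supported on $|j|\leq\mathbb{L}^k$ (small by induction) and an annular piece $\mathbb{L}^k\leq|j|\leq\mathbb{L}^{k+1}-1$ (small because $[Q_k^1]^{\rho_j}\lessdot|I^0_{|j|-1}|^4$ from $(\mathcal{B}_1)$); your formula $b^k=-\bar\Omega(k)^{-1}[\partial_I P_k](I^0)$ is correct in spirit, but the weighted bound on $\|b^k\|$ does not follow from $(\mathcal{A}_2)$ alone---it needs this decomposition and the order-$5$ assumption on the interaction.
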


From Theorem \ref{0*},\ the following corollary can be obtained.

\begin{corollary}\label{1*}
Consider the Hamiltonian
\begin{eqnarray}\nonumber
H_*=H\circ\Psi_{*}= e_{*}+ \langle\omega(I^{0}),I-I^{0}\rangle +\frac{1}{2}\langle\Omega_{*}(I-I^{0}),I-I^{0}\rangle + P_{*}(I-I^{0},\theta)
\end{eqnarray}
obtained in Theorem \ref{0*}.\ For a fixed nonzero $ I^0 $,\ there exists a full dimensional KAM torus
$ \mathcal{T}({I^{0}})= \mathbb{T}^{\mathbb{Z}} \times \{I=I^{0}\}$ with frequency $ \omega(I^{0})$ for the Hamiltonian $ H_{*}$,\ and $ \mathcal{T}: = \Psi^{-1}_{*}\mathcal{T}({I^{0}})$ is a full dimensional KAM torus for the initial Hamiltonian $ H$.
\end{corollary}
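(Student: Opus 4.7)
The plan is to read off the corollary directly from the normal form produced by Theorem~\ref{0*} by checking that on $\{I=I^{0}\}$ the Hamiltonian vector field of $H_{*}$ reduces to the constant vector $(\omega(I^{0}),0)$, and then transporting the resulting invariant torus back by $\Psi_{*}^{-1}$.

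First I would write out the Hamilton equations for $H_{*}$ in the coordinates given by Theorem~\ref{0*}. For each $j\in\mathbb{Z}$ one has
\begin{eqnarray*}
\dot\theta_{j} \;=\; \frac{\partial H_{*}}{\partial I_{j}} \;=\; \omega_{j}(I^{0}) + \sum_{k}(\Omega_{*})_{jk}(I_{k}-I^{0}_{k}) + \frac{\partial P_{*}}{\partial I_{j}}, \qquad
\dot I_{j} \;=\; -\frac{\partial H_{*}}{\partial\theta_{j}} \;=\; -\frac{\partial P_{*}}{\partial\theta_{j}}.
\end{eqnarray*}
The linear term in $\dot\theta_{j}$ obviously vanishes at $I=I^{0}$. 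The key point is the behaviour of $P_{*}$: by Theorem~\ref{0*} every monomial appearing in $P_{*}$ contains at least three factors of the form $(I_{i}-I^{0}_{i})$, so both $\partial P_{*}/\partial I_{j}$ and $\partial P_{*}/\partial\theta_{j}$ still carry at least two such factors and therefore vanish identically on the slice $\{I=I^{0}\}$. Consequently, on $\mathcal{T}(I^{0})=\mathbb{T}^{\mathbb{Z}}\times\{I^{0}\}$ the flow of $H_{*}$ is simply $\theta(t)=\theta(0)+\omega(I^{0})\,t$, $I(t)\equiv I^{0}$, so $\mathcal{T}(I^{0})$ is invariant and carries a linear (Diophantine, by $(\mathcal{A}_{1})$) flow of frequency $\omega(I^{0})$. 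This is precisely a full-dimensional Kolmogorov torus of $H_{*}$.

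For the second assertion I would use that $\Psi_{*}:\widetilde{\mathcal{D}}_{s,r}\to\widetilde{\mathcal{D}}_{\frac12 s,\frac12 r}$ is a real-analytic symplectic transformation with $H=H_{*}\circ\Psi_{*}^{-1}$, and that the estimate $|\Psi_{*}-\mathrm{id}|\lessdot \epsilon^{17/50}$ in particular guarantees that $\Psi_{*}^{-1}$ is well defined on a neighbourhood of $\mathcal{T}(I^{0})$. Because $\Psi_{*}$ conjugates the Hamiltonian flows of $H_{*}$ and $H$, the set $\mathcal{T}:=\Psi_{*}^{-1}\mathcal{T}(I^{0})$ is invariant under the flow of $H$; and since $\mathcal{T}(I^{0})$ is topologically $\mathbb{T}^{\mathbb{Z}}$ and $\Psi_{*}^{-1}$ is a real-analytic diffeomorphism onto its image, $\mathcal{T}$ is again a full-dimensional torus embedded in phase space, with the flow of $H$ on $\mathcal{T}$ conjugate via $\Psi_{*}$ to the linear flow with frequency $\omega(I^{0})$.

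I do not expect any genuine obstacle here: everything is a formal consequence of Theorem~\ref{0*}. The only points deserving care are (i) justifying that the series defining $\partial P_{*}/\partial\theta_{j}$ and $\partial P_{*}/\partial I_{j}$ can be differentiated termwise and evaluated at $I=I^{0}$, which follows from analyticity on $\widetilde{\mathcal{D}}_{s/2,r/2}$ and the high-order vanishing in $I-I^{0}$ coded in the bound on $P_{*}$; and (ii) checking that $\mathcal{T}=\Psi_{*}^{-1}\mathcal{T}(I^{0})$ is a genuine embedded torus and not merely a set, which is immediate from $\Psi_{*}$ being a near-identity real-analytic symplectomorphism.
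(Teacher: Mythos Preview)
Your argument is correct and is exactly the standard verification one makes here: the paper does not supply a separate proof of Corollary~\ref{1*}, treating it as an immediate consequence of the normal form in Theorem~\ref{0*}, and your computation of the Hamiltonian vector field on $\{I=I^{0}\}$ together with the pull-back by the symplectic map $\Psi_{*}$ is precisely the intended reading. One small sharpening: $\partial P_{*}/\partial\theta_{j}$ actually retains all three factors of $(I-I^{0})$ (only $\partial P_{*}/\partial I_{j}$ drops one), but your conclusion that both vanish on $\{I=I^{0}\}$ is of course unaffected.
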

\begin{remark}
Theorem \ref{0*} and corollary \ref{1*} extend the original Kolmogorov theorem \cite{Kol1954On} to the infinite-dimensional Hamiltonian systems of short range.\ It is still open to extend the Kolmogorov theorem to some Hamiltonian PDEs,\ which is even thought to be a harder problem.\ See Bourgain \cite{bo2005on}.
\end{remark}
\begin{remark}
The initial conditions in Theorem \ref{0*} are strongly localized in space.\ More exactly,\ we define
\begin{eqnarray}
\nonumber |I^{0}_{j}| \lessdot  e^{|j|^{1+\alpha}},
\end{eqnarray}
for $ \alpha > 0 $,\ namely the decay is super-exponential.\ With this fast decay and the fact that the interaction starts with five orders,\ one can choose a length scale ($\mathbb{L}^{k}$) such that we only need to consider the finite Hamiltonian $ H(\theta,I(k))$ in $ k$-th KAM iteration.\ In fact,\ the methods of  Fr\"{o}hlich-Spencer-Wayne \cite{1986An} and P\"{o}schel \cite{P1990Small} are still valid for the Hamiltonian (\ref{maineq}). See Section 4 for more details of the super-exponential decay.
\end{remark}

The rest of the paper consists almost entirely of the proofs of the preceding results,\ which employs the usual Newton type iteration procedure to handle small divisor problems.\ In section 2 the corresponding homological equation is considered,\ and in section 3 one step of iterative scheme is described in details.\ The iteration itself takes place in section 4,\ and section 5 provides the estimate of measure.\ In section 6,\ we prove the main theorem.

\section{The Homological Equations}
\subsection{Derivation of homological equations}
The proof of main Theorem employs the rapidly
converging iteration scheme of Newton type to deal with small divisor problems
introduced by Kolmogorov, involving the infinite sequence of coordinate transformations.\ Recalling the sequence of length scales,\ $ \mathbb{L}^{k}\nearrow \infty $,\ we construct this transformation inductively,\ attempting,\ at the $ k$-th stage of the inductive process,\ to "kill" only those part of the interaction,\ $ f_{\lfloor i,j\rceil}$,\ with the point $ i $ and $j$ lying inside the box $ B_{\mathbb{L}^{k}} $,\ which consists of all sites $ j$ with $ |j|<\mathbb{L}^{k}$.\ In the following,\ we denote $ |\cdot |$ the sup-norm for any matrices or vectors of finite order.\ If $ A_{1}$ and $ A_{2}$ are $ k \times k $ and $ l \times l $ matrices with $ k < l $ respectively,\ we define
$$
A_{1} + A_{2} = \begin{pmatrix}
A_{1}&0\\
0&0
\end{pmatrix}_{l\times l} + A_{2}.
$$

In order to provide a formal statement,\ let us define precisely the analytic function $ f_{\lfloor i,j\rceil}(I,\theta)$.\ Assume $ f_{( i,j)}(I,\theta) = \sum_{\alpha_{i}+\alpha_{j}\geq 5}\widetilde{a}_{\alpha_{i}\alpha_{j}}(\theta_{i},\theta_{j})(I_{i})^{\alpha_{i}}(I_{j})^{\alpha_{j}}$,\ for $ \alpha_{j}\in \mathbb{N}$.\ We thus have
\begin{eqnarray}
\nonumber  f_{\lfloor i,j\rceil}(I,\theta)
 &=& \sum_{\alpha_{j-1}+\alpha_{j}\geq 5}\widetilde{a}_{\alpha_{j-1}\alpha_{j}}(\theta_{j-1},\theta_{j})(I_{j-1})^{\alpha_{j-1}}(I_{j})^{\alpha_{j}} + \sum_{\alpha_{j}\geq 5}\widetilde{a}_{\alpha_{j}}(\theta_{j})(I_{j})^{\alpha_{j}}\\
\nonumber &&+
\sum_{\alpha_{j}+\alpha_{j+1}\geq 5}\widetilde{a}_{\alpha_{j}\alpha_{j+1}}(\theta_{j},\theta_{j+1})(I_{j})^{\alpha_{j}}(I_{j+1})^{\alpha_{j+1}}.
\end{eqnarray}
Since $ I_{j}^{0} \neq 0 $ for some $ j\in \mathbb{Z}$,\ we can translate it to the zero point by taking a symplectic transformation $ \Phi$ given by
\[
(I,\theta) \rightarrow \begin{cases}
\rho = I-I^{0}\\
\theta=\theta.
\end{cases}
\]
Hence,\ (\ref{maineq}) has the form
\begin{eqnarray}
\nonumber H(\rho,\theta) &=& H_{0}(I^{0}+\rho) + \epsilon H_{1}(I^{0}+\rho,\theta)\\
\nonumber &=& H_{0}(I^{0}) + \sum_{j\in \mathbb{Z}}\omega_{j}(I^{0})\rho_{j} + \frac{1}{2}\sum_{j\in \mathbb{Z},|i-j|\leq1}\frac{\partial^{2}H_{0}(I^{0})}{\partial I_{i}\partial I_{j}}\rho_{i}\rho_{j}\\
\nonumber&&+ \sum_{\lfloor i,j\rceil|j\in \mathbb{Z}}V_{\lfloor i,j\rceil}(\rho) + \epsilon \tilde{H}_{1}(\rho,\theta)\\
\nonumber &=& e + \langle\omega,\rho\rangle + \frac{1}{2}\langle\Omega\rho,\rho\rangle + V(\rho) + \epsilon \tilde{H}_{1}(\rho,\theta),
\end{eqnarray}
where $ V(\rho)=   H_{0}(I^{0}+\rho)-(e + \langle\omega,\rho\rangle + \frac{1}{2}\langle\Omega\rho,\rho\rangle)= \underset{\lfloor i,j\rceil|j\in \mathbb{Z}}{\sum}V_{\lfloor i,j\rceil}(\rho)$ and
\begin{eqnarray}
\label{00*}\tilde{H}_{1}(\rho,\theta) &=& \sum_{\lfloor i,j\rceil}f_{\lfloor i,j\rceil}(\rho_{i}+I^0_{i},\rho_{j}+I^{0}_{j},\theta_{i},\theta_{j}) \\
\nonumber&=&\sum_{j\in \mathbb{Z}}f_{( j-1,j)}(\rho,\theta)+ f_{( j,j)}(\rho,\theta)+ f_{( j,j+1)}(\rho,\theta).
\end{eqnarray}
Correspondingly,\ the new domain turns to be
\[
{\mathcal{D}}_{s,r}:=\{(\rho,\theta)\in \mathbb{C}^{\mathbb{Z}} \times \mathbb{T}^{\mathbb{Z}}: \| \rho \| < s, |Im \theta|_{\infty} < r\}.
\]
For the perturbation $ \tilde{H}_{1}(\rho,\theta)$ with new variables $ (\rho,\theta)$,\ we easily have
\begin{eqnarray}
\nonumber &&f_{( j-1,j)}(\rho,\theta): \\
\nonumber &=& \sum_{\alpha_{j-1}+\alpha_{j}\geq 5}\widetilde{a}_{\alpha_{j-1}\alpha_{j}}(\theta_{j-1},\theta_{j})(\rho_{j-1}+I^{0}_{j-1})^{\alpha_{j-1}}
(\rho_{j}+I^{0}_{j})^{\alpha_{j}}\\
\nonumber&=&\sum_{\alpha_{j-1}+\alpha_{j}\geq 5}\widetilde{a}_{\alpha_{j-1}\alpha_{j}}(\theta_{j-1},\theta_{j})
\left(\sum^{\alpha_{j-1}}_{k=0}\mathbf{C}_{\alpha_{j-1}}^{k}\rho_{j-1}^{k}(I^{0}_{j-1})^{\alpha_{j-1}-k}\right)
\left(\sum^{\alpha_{j}}_{l=0}\mathbf{C}_{\alpha_{j}}^{l}\rho_{j}^{l}(I^{0}_{j})^{\alpha_{j}-l}\right),
\end{eqnarray}
\begin{eqnarray}
\nonumber f_{( j,j)}(\rho,\theta)&=& \sum_{\alpha_{j}\geq 5}\widetilde{a}_{\alpha_{j}}(\theta_{j})
(\rho_{j}+I^{0}_{j})^{\alpha_{j}}=\sum_{\alpha_{j}\geq 5}\widetilde{a}_{\alpha_{j}}(\theta)
\left(\sum^{\alpha_{j}}_{l=0}\mathbf{C}_{\alpha_{j}}^{k}\rho_{j}^{l}(I^{0}_{j})^{\alpha_{j}-k}\right),
\end{eqnarray}
and
\begin{eqnarray}
\nonumber &&f_{(j,j+1)}(\rho,\theta)\\
\nonumber &=&
\sum_{\alpha_{j}+\alpha_{j+1}\geq 5}\widetilde{a}_{\alpha_{j}\alpha_{j+1}}(\theta_{j},\theta_{j+1})(\rho_{j}+I^{0}_{j})^{\alpha_{j}}
(\rho_{j+1}+I^{0}_{j+1})^{\alpha_{j+1}}\\
\nonumber &=&
\sum_{\alpha_{j}+\alpha_{j+1}\geq 5}\widetilde{a}_{\alpha_{j}\alpha_{j+1}}(\theta_{j},\theta_{j+1})
\left(\sum^{\alpha_{j}}_{k=0}\mathbf{C}_{\alpha_{j}}^{k}\rho_{j}^{k}(I^{0}_{j})^{\alpha_{j}-k}\right)
\left(\sum^{\alpha_{j+1}}_{l=0}\mathbf{C}_{\alpha_{j+1}}^{l}\rho_{j+1}^{l}(I^{0}_{j+1})^{\alpha_{j+1}-l}\right).
\end{eqnarray}
Moreover,\ the analytic function $ \tilde{H}_{1}(\rho,\theta)$ can be expanded into power series
\[
\tilde{H}_{1}(\rho,\theta) = \tilde{H}^{\theta}_{1}(\theta) + \langle \tilde{H}^{\rho}_{1}(\theta), \rho\rangle + \langle \tilde{H}^{\rho\rho}_{1}(\theta)\rho, \rho\rangle + \sum_{j\in \mathbb{Z},l\geq 3} {W}^{l}_{j}(\rho),
\]
where
$ W(\rho)= \underset{j\in \mathbb{Z},l\geq 3}{\sum}W^{l}_{j}(\rho)$ with $ W^{l}_{j}(\rho)= \mathcal{O}(|\rho_{j-1}|^{l_{j-1}}|\rho_{j}|^{l_{j}}) + \mathcal{O}(|\rho_{j}|^{l_{j}}|\rho_{j+1}|^{l_{j+1}})$ of $ l_{j-1}+l_{j},l_{j}+l_{j+1} \geq 3$.

From (\ref{00*}),\ one has
\begin{eqnarray}
\nonumber\tilde{H}^{\theta}_{1}(\theta)
&=& \sum_{j\in \mathbb{Z}}\sum_{\alpha_{j-1}+\alpha_{j}\geq 5}{a}_{\alpha_{j-1}\alpha_{j}}(\theta_{j-1},\theta_{j})
(I^{0}_{j-1})^{\alpha_{j-1}}(I^{0}_{j})^{\alpha_{j}}+\sum_{\alpha_{j}\geq 5}{a}_{\alpha_{j}}(\theta_{j})(I^{0}_{j})^{\alpha_{j}}\\
\nonumber &&+\sum_{\alpha_{j}+\alpha_{j+1}\geq 5}{a}_{\alpha_{j}\alpha_{j+1}}(\theta_{j},\theta_{j+1})
(I^{0}_{j})^{\alpha_{j}}(I^{0}_{j+1})^{\alpha_{j+1}};\\
\nonumber\tilde{H}^{\rho_{j}}_{1}(\theta)&=&
\sum_{\alpha_{j-1}+\alpha_{j}\geq 5}{a}_{\alpha_{j-1}\alpha_{j}}(\theta_{j-1},\theta_{j})
(I^{0}_{j-1})^{\alpha_{j-1}}(I^{0}_{j})^{\alpha_{j}-1}+\sum_{\alpha_{j}\geq 5}{a}_{\alpha_{j}}(\theta_{j})(I^{0}_{j})^{\alpha_{j}-1}\\
\nonumber &&+\sum_{\alpha_{j}+\alpha_{j+1}\geq 5}{a}_{\alpha_{j}\alpha_{j+1}}(\theta_{j},\theta_{j+1})
(I^{0}_{j})^{\alpha_{j}-1}(I^{0}_{j+1})^{\alpha_{j+1}};\\
\nonumber\tilde{H}^{\rho_{j-1}\rho_{j}}_{1}(\theta)&=& \tilde{H}^{\rho_{j}\rho_{j-1}}_{1}(\theta)=
\frac{1}{2}(\sum_{\alpha_{j}+\alpha_{j-1}\geq 5}{a}_{\alpha_{j}\alpha_{j-1}}(\theta_{j},\theta_{j-1})
(I^{0}_{j})^{\alpha_{j}-1}(I^{0}_{j-1})^{\alpha_{j-1}-1}\\
\nonumber &&+\sum_{\alpha_{j-1}+\alpha_{j}\geq 5}{a}_{\alpha_{j-1}\alpha_{j}}(\theta_{j-1},\theta_{j})
(I^{0}_{j-1})^{\alpha_{j-1}-1}(I^{0}_{j})^{\alpha_{j}-1});\\
\nonumber\tilde{H}^{\rho_{j}\rho_{j}}_{1}(\theta)&=&
\sum_{\alpha_{j-1}+\alpha_{j}\geq 5}{a}_{\alpha_{j-1}\alpha_{j}}(\theta_{j-1},\theta_{j})
(I^{0}_{j-1})^{\alpha_{j-1}}(I^{0}_{j})^{\alpha_{j}-2}+\sum_{\alpha_{j}\geq 5}{a}_{\alpha_{j}}(\theta_{j})(I^{0}_{j})^{\alpha_{j}-2}\\
\nonumber &&+\sum_{\alpha_{j}+\alpha_{j+1}\geq 5}{a}_{\alpha_{j}\alpha_{j+1}}(\theta_{j},\theta_{j+1})
(I^{0}_{j})^{\alpha_{j}-2}(I^{0}_{j+1})^{\alpha_{j+1}};\\
\nonumber\tilde{H}^{\rho_{i}\rho_{j}}_{1}(\theta)&=& 0,\   \mbox{ for any}\ |i-j|> 1.
\end{eqnarray}

Now consider the Hamiltonian  $ H $ of the form
\begin{eqnarray}\label{N1*}
{H} &=& N + R,
\end{eqnarray}
where
\begin{eqnarray}
\nonumber N(\rho) &=& e + \langle\omega,\rho\rangle + \frac{1}{2}\langle\Omega \rho,\rho\rangle,
\end{eqnarray}
and
\begin{eqnarray}
\nonumber
{R}(\rho,\theta) &=& V(\rho) + \epsilon\tilde{H}_{1}(\rho,\theta).
\end{eqnarray}
More precisely,\ let
\begin{eqnarray}
\nonumber
V(\rho,\theta) &=& {V}_{1}(\rho)+ {V}_{2}(\rho),
\end{eqnarray}
with
\begin{eqnarray}
\nonumber
{V}_{1}(\rho) &=& \sum_{\lfloor i,j\rceil|\ |i|,|j|\leq\mathbb{L}^+-1}V_{\lfloor i,j\rceil}(\rho),\\
\nonumber
{V}_{2}(\rho) &=& \sum_{\lfloor i,j\rceil|\ |i|\ \mbox{or}\ |j|\geq\mathbb{L}^+}V_{\lfloor i,j\rceil}(\rho);
\end{eqnarray}
and
\begin{eqnarray}
\nonumber
\epsilon\tilde{H}_{1}(\rho,\theta) &=& R_{1}(\rho,\theta) + R_{2}(\rho,\theta),
\end{eqnarray}
with
\begin{eqnarray}
\nonumber
{R}_{1}(\rho,\theta) &=& \epsilon\sum_{\lfloor i,j\rceil|dist(\lfloor i,j\rceil,0)\leq\mathbb{L}^{+}-1}f_{\lfloor i,j\rceil}(I^{0}_{i}+\rho_{i},I^{0}_{j}+\rho_{j},\theta_{i},\theta_{j}),\\
\nonumber
{R}_{2}(\rho,\theta) &=& \epsilon\sum_{\lfloor i,j\rceil|dist(\lfloor i,j\rceil,0)\geq\mathbb{L}^{+}}f_{\lfloor i,j\rceil}(I^{0}_{i}+\rho_{i},I^{0}_{j}+\rho_{j},\theta_{i},\theta_{j}).
\end{eqnarray}
Denote $
 R_{1}(\rho,\theta) =  R_{1}^{low}(\rho,\theta)+ R_{1}^{high}(\rho,\theta)$,\ where
 \begin{eqnarray}
 \label{030}R_{1}^{low}(\rho,\theta)&=&  R_{1}^{0}(\theta)+ \langle  R_{1}^{1}(\theta),\rho\rangle + \langle R_{1}^{2}(\theta)\rho,\rho\rangle,
\end{eqnarray}
and
\begin{eqnarray}
 \label{031} R_{1}^{high}(\rho,\theta)=  R_{1}^{3}(\rho,\theta)+ R_{1}^{4}(\rho,\theta),
\end{eqnarray}
with $ R_{1}^{3}(\rho,\theta)$ being cubic of $ |\rho_{j}|$ and $ R_{1}^{4}(\rho,\theta) = \mathcal{O}(|\rho_{j}|^{l})$ for $ l \geq 4$.
%Denote $ H_{0} = N + \tilde{V} + R_{1}$.\ Then $ H $ can be rewritten into the form $ H = H_{0} + R_{2}$.

We desire to eliminate the terms $ R_{1}^{low} $ in (\ref{030}) by the coordinate transformation $\Psi$, which is obtained as the time-1- map $X_F^{t}|_{t=1}$ of a Hamiltonian
vector field $X_F$.\ Write
\begin{eqnarray}
\nonumber F(\rho,\theta) &=& \tilde{F}(\rho,\theta)+ \langle a,\theta\rangle =F^0(\theta) + \langle F^1(\theta),\rho\rangle + \langle F^2(\theta)\rho,\rho\rangle + \langle a,\theta\rangle,
\end{eqnarray}
where vector $ a$ is chosen to keep the frequency $ \omega$ fixed.
\begin{remark}
 Since the existing of the term $ \langle a,\theta\rangle $,\ the function $ F$ is defined on $ \mathbb{C}^{\mathbb{Z}} \times \mathbb{C}^{\mathbb{Z}}$ not $ \mathbb{C}^{\mathbb{Z}} \times \mathbb{T}^{\mathbb{Z}} $,\ but the flow $ X_{F}$ is perfectly well defined on $ \mathbb{C}^{\mathbb{Z}} \times \mathbb{T}^{\mathbb{Z}}$.
\end{remark}
Using Taylor's formula,\ we have
\begin{eqnarray}\label{013*}
H_{+}&=&H \circ\Psi= H \circ X_F^{t}|_{t=1}\\
\nonumber &=& H + \{H,F\} + \int_{0}^1(1-t)\{\{H,F\},F\}\circ X_F^{t}\ \mathrm{d}{t}\\
\nonumber &=& N + \{N,F\} + \int_{0}^1(1-t)\{\{N,F\},F\}\circ X_F^{t}\ \mathrm{d}{t}\\
\nonumber &&+ R_{1}^{low} + \int_{0}^1\{R_{1}^{low},F\}\circ X_F^{t}\ \mathrm{d}{t}\\
\nonumber &&+ R_{1}^{high}+ {V}_{1} + \{R_{1}^{high}+ {V}_{1},F\}+  ({V}_{2} + R_{2})\circ X_F^{1}\\
\nonumber && + \int_{0}^1(1-t)\{\{R_{1}^{high}+ {V}_{1},F\},F\}\circ X_F^{t}\ \mathrm{d}{t}.
\end{eqnarray}
Then we obtain the modified homological equation
\begin{eqnarray}\label{N2*}
\{{N},{F}\} + R_{1}^{low} + \{R_{1}^{high}+{V}_{1},F\}^{low}= N_{+} - N,
\end{eqnarray}
where $ N_{+}$ is given below and $ \{\cdot,\cdot\}$ is the Poisson bracket of functions on $ \mathcal{D}_{s,r}$ computed by the formula
\begin{eqnarray}\label{N2*+}
\{ F,G\} = \langle F_{\theta},G_{\rho}\rangle - \langle F_{\rho},G_{\theta} \rangle.
\end{eqnarray}\\
If the  homological equation (\ref{N2*}) is solved,\ the new perturbation term $ R_{1+} $ can be written as
\begin{eqnarray}
\label{032} R_{1+} &=& R_{1}^{high}+ {V}_{1} + \{R_{1}^{high}+ {V}_{1},F\}^{high}\\
\label{033} &&+ \int_{0}^1(1-t)\{\{N + R_{1}^{high}+ {V}_{1} ,F\},F\}\circ X_F^{t}\ \mathrm{d}{t}\\
\label{034} &&+ \int_{0}^1\{R_{1}^{low},F\}\circ X_F^{t}\ \mathrm{d}{t}.
\end{eqnarray}
Note that we do not need to eliminate the terms in (\ref{032}) at the next step of KAM procedure,\ so (\ref{032}) is not necessary to be too small.\ On the other hand,\ the remaining terms are either quadratic in $ F $ or bounded by $ \epsilon^{1+\beta}$(we will prove this in details below).\ Therefore,\ we can obtain a non-degenerate normal form of order 2 with a fixed frequency $ \omega$.

Once all the above procedures work well,\ our new Hamiltonian reads
 \[
 H_{+} = N_{+} +  R_{1+} + {V}_{2} + R_{2}.
 \]
Note $ R_{2} = R_{21} + R_{22}$,\ where
\begin{eqnarray}
\nonumber
{R}_{21}(\rho,\theta) &=& \epsilon\sum_{\lfloor i,j\rceil|\mathbb{L}^{+}\leq dist(\lfloor i,j\rceil,0)\leq\mathbb{L}^{++}-1}f_{\lfloor i,j\rceil}(I^{0}_{i}+\rho_{i},I^{0}_{j}+\rho_{j},\theta_{i},\theta_{j}),\\
\nonumber
{R}_{22}(\rho,\theta) &=& \epsilon\sum_{\lfloor i,j\rceil|dist(\lfloor i,j\rceil,0)\geq\mathbb{L}^{++}}f_{\lfloor i,j\rceil}(I^{0}_{i}+\rho_{i},I^{0}_{j}+\rho_{j},\theta_{i},\theta_{j}),
\end{eqnarray}
with a larger number $ \mathbb{L}^{++}$.

It is clear that one has to eliminate the term $ R_{1+} + R_{21}$ in the next iterative process and from this term one easily gets that $R_{1+}$ depends only on $ \rho_{j},\theta_{j}$ for $ |j|\leq\mathbb{L}^{+}$,\ while $ R_{21}$ depends on $ \rho_{j},\theta_{j}$ for $ |j|\geq \mathbb{L}^{+}$.\ So in order to know exactly what the new error term depends on and keep the eliminated term unified from $ 1$-th iteration($ R_{1}$ is of the form like $ A + B $),\ we will rewrite $ R_{1}$ into several terms directly below.\\
Let
\begin{eqnarray}
\nonumber
{R}_{1} &=& P(\rho,\theta) + \epsilon Q(\rho,\theta)\\
\label{035} &&+\epsilon\sum_{\mbox{either}\ |i|\ \mbox{or}\ |j|\ \geq\ \mathbb{L}^+\ \mbox{but not both}}f_{\lfloor i,j\rceil}(I^{0}_{i}+\rho_{i},I^{0}_{j}+\rho_{j},\theta_{i},\theta_{j}),
\end{eqnarray}
with
\begin{eqnarray}\label{N1**}
\nonumber P(\rho,\theta) &=&  \epsilon\left(f_{\lfloor 0,0\rceil}(I_{0}^{0}+\rho_{0},\theta_{0})+\sum_{|j|=\mathbb{L}=1}f_{\lfloor 0,j\rceil}(I_{0}^{0}+\rho_{0},I_{j}^{0}+\rho_{j},\theta_{0},\theta_{j})\right),
\end{eqnarray}
and
\begin{eqnarray}
\nonumber Q (\rho,\theta)&=& \sum_{\lfloor i,j\rceil|1=\mathbb{L}\leq |i|,|j|\leq \mathbb{L}^+-1}f_{\lfloor i,j\rceil}(I^{0}_{i}+\rho_{i},I^{0}_{j}+\rho_{j},\theta_{i},\theta_{j}).
\end{eqnarray}
\begin{remark}\label{*} Since $ \mathbb{L}=1$ and $ |f_{\lfloor i,j\rceil}(I,\theta)|\leq K\exp[-l(|j|-1)^{1+\alpha}]$,\ we can choose $ K=(2^{6})^{-1} $ such that $ |P|_{s,r} \leq \epsilon$.
\end{remark}
More exactly,\ denote
\[
N = N ^{0}+ N^{1} + N^{2},
\]
where
\begin{eqnarray}
\nonumber N^{0} &=& \underset{|j|\leq\mathbb{L}^+-1}{\sum}\omega_{j}\rho_{j}+ \underset{|i|,|j|\leq\mathbb{L}^+-1}{\sum}\Omega_{ij}\rho_{i}\rho_{j},\\
\nonumber N^{1} &=& \underset{\mbox{either}\ |i|\ \mbox{or}\ |j|=\mathbb{L}^+,\ \mbox{but not both}}{\sum}\Omega_{ij}\rho_{i}\rho_{j},\\
\nonumber N^{2}&=& \underset{|j|\geq\mathbb{L}^{+}}{\sum}\omega_{j}\rho_{j}+ \underset{|i|,|j|\geq\mathbb{L}^+}{\sum}\Omega_{ij}\rho_{i}\rho_{j};
\end{eqnarray}
and denote
\[
{V}(\rho) = \tilde{V}(\rho)+ \bar{V}(\rho) +\check{V}(\rho),
\]
where
\begin{eqnarray}
\nonumber
\tilde{V}(\rho) &=& \sum_{\lfloor i,j\rceil|\ |i|,|j|\leq\mathbb{L}^+-1}V_{\lfloor i,j\rceil}(\rho),\\
\nonumber
\bar{V}(\rho) &=& \sum_{\lfloor i,j\rceil|\ |i|\ \mbox{or}\ |j|=\mathbb{L}^+}V_{\lfloor i,j\rceil}(\rho),\\
\nonumber
\check{V}(\rho) &=& \sum_{\lfloor i,j\rceil|\ |i|,|j|\geq\mathbb{L}^+}V_{\lfloor i,j\rceil}(\rho).
\end{eqnarray}

For convenience,\ denote $
 P(\rho,\theta) = P^{low}(\rho,\theta)+P^{high}(\rho,\theta)$,\ where
 \begin{eqnarray}
P^{low}(\rho,\theta)&=& P^{0}(\theta)+ \langle P^{1}(\theta),\rho\rangle + \langle P^{2}(\theta)\rho,\rho\rangle,
\end{eqnarray}
and
\begin{eqnarray}
P^{high}(\rho,\theta)= P^{3}(\rho,\theta)+P^{4}(\rho,\theta),
\end{eqnarray}
with $ P^{3}(\rho,\theta)$ being cubic of $ |\rho_{j}|$ and $ P^{4}(\rho,\theta) = \mathcal{O}(|\rho_{j}|^{l})$ for $ l \geq 4$.\\
Similarly,\ note $Q(\rho,\theta) = Q^{low}(\rho,\theta)+Q^{high}(\rho,\theta)$,\ where
\begin{eqnarray}
Q^{low}(\rho,\theta)&=& Q^{0}(\theta)+ \langle Q^{1}(\theta),\rho\rangle + \langle Q^{2}(\theta)\rho,\rho\rangle,
\end{eqnarray}
and
\begin{eqnarray}
Q^{high}(\rho,\theta)= Q^{3}(\rho,\theta)+Q^{4}(\rho,\theta),
\end{eqnarray}
with $ Q^{3}(\rho,\theta) $ being cubic of $ |\rho_{j}|$ and $ Q^{4}(\rho,\theta) = \mathcal{O}(|\rho_{j}|^{l})$ for $ l \geq 4$.\\
Particularly,\ $ \tilde{V}(\rho)$ has the same form
\[
\tilde{V}(\rho) = \tilde{V}^{3}(\rho)+ \tilde{V}^{4}(\rho),
\]
with $ \tilde{V}^{3}(\rho) $ is cubic of $ |\rho_{j}|$ and $ \tilde{V}^{4}(\rho,\theta) = \mathcal{O}(|\rho_{j}|^{l})$ for $ l \geq 4$;\
\[
\bar{V}(\rho)=\bar{V}^{3}(\rho)+\bar{V}^{4}(\rho),
\]
with $ \bar{V}^{3}(\rho) $ being cubic of $ |\rho_{j}|$ and $ \bar{V}^{4}(\rho,\theta) = \mathcal{O}(|\rho_{j}|^{l})$ for $ l \geq 4$;\ and
\[
\check{V}(\rho)=\check{V}^{3}(\rho)+\check{V}^{4}(\rho),
\]
with $ \check{V}^{3}(\rho) $ being cubic of $ |\rho_{j}|$ and $ \check{V}^{4}(\rho,\theta) = \mathcal{O}(|\rho_{j}|^{l})$ for $ l \geq 4$.

Hence,\ the term we desire to eliminate is $ P^{low}+\epsilon Q^{low}$ in (\ref{N2*}) since (\ref{035}) can be bounded by $\epsilon^{1+\beta}$(see below).\ Furthermore,\ in order to cure the problem that the measures of $ \omega $ in $(\mathcal{A}_{1})$ will diverge,\ we reduce the infinite sum of $ v$ to a finite one.\ In other words, the homological equation of (\ref{N2*}) turns to be
\begin{eqnarray}\label{N2}
\{{N}^{0},{F}\} + \Gamma P^{low}+\epsilon \Gamma Q^{low}+ \{\tilde{V} &+&\Gamma P^{high}+\epsilon \Gamma Q^{high},F\}^{low}\\
\nonumber&=& N^{0}_{+} - N^{0},
\end{eqnarray}
where $\Gamma P, \Gamma Q$ mean the truncation of the $ M^{+}$-th Fourier coefficient of $ P, Q$\ with $ M^{+}$ given later.\\
Let ${F}^{0}$( resp. $ F^1,F^2$)  has the form of $P^0+\epsilon Q^{0}$( resp.$ P^1+\epsilon Q^{1},P^2+\epsilon Q^{2}$).\
That is,\  ${F}^{0}$( resp. $ F^1,F^2$) can be expanded to
\begin{eqnarray}
\label{aa}{F}^0(\theta)&=&\sum_{0< |v| < M^{+}}{\widehat{F}_{v}}^{0}e^{\mathbf{i}\langle v,\theta\rangle},
\end{eqnarray}
where $ {\widehat{F}_{v}}^{j}$ are the $ v$-th Fourier coefficients of $ {F}^{j}$($ j= 0,1,2$).\\
%\begin{eqnarray}\label{N2}
%\{\tilde{N}_{+},{F}\} + \Gamma P^{low}+\epsilon \Gamma Q^{low}+ \{\tilde{V}^{3}+\Gamma P^{3}+\epsilon \Gamma Q^{3},F\}^{low}=0,
%\end{eqnarray}
%where $\Gamma P, \Gamma Q$ mean the truncation of the $ M^{+}$-th Fourier coefficient of $ P, Q$,\ and $ \{\cdot,\cdot\}$ is the Poisson bracket of functions on $ \mathcal{D}_{s,r}$ computed by the formula
%\begin{eqnarray}\label{N2*+}
%\{ F,G\} = \langle F_{\theta},G_{\rho}\rangle - \langle F_{\rho},G_{\theta} \rangle.
%\end{eqnarray}
%In facts,\ one only needs to consider $ H_{0} $ in the following  while $ H_{1}$ stays the same for an identical transformation in this step.
Note $ \langle U\rho,\rho\rangle
=\{\tilde{V}+ P^{high}+\epsilon  Q^{high},F\}^{low}=\{\tilde{V}^{3}+\Gamma P^{3}+\epsilon \Gamma Q^{3},F^{0}+\langle a,\theta\rangle\} $.\ From the definition of Poisson bracket (\ref{N2*+}),\ we have the following equations from distinguishing terms of (\ref{N2}) by the order of $ \rho $:
\begin{equation}\label{**}
\begin{cases}
-\partial_{\omega}F^{0}-\langle \omega(+), a\rangle + \Gamma{P}^{0}+ \epsilon\Gamma{Q}^{0}=0,\\
-\partial_{\omega}F^{1}-\tilde{\Omega} a -\tilde{\Omega}\partial_{\theta}F^{0}+\Gamma{P}^{1}+ \epsilon\Gamma{Q}^{1}=0,\\
-\partial_{\omega}F^{2}-\tilde{\Omega}\partial_{\theta}F^{1}+ U +\Gamma{P}^{2}+ \epsilon\Gamma{Q}^{2}=0,
\end{cases}
\end{equation}
where $ \partial_{\omega}= \omega\cdot\partial_{\theta} $, $ \omega(+)= (\omega_{j})_{|j|\leq\ \mathbb{L}^{+}-1} $ and $ \tilde{\Omega}= {\Omega}(+)=(\Omega_{ij})_{|i|,|j|\leq \mathbb{L}^{+}-1}$.

Therefore,\ the new Hamiltonian ${H}_{+}$ has the form
\begin{eqnarray}\label{013}
H_{+}&=&H\circ X_F^{t}|_{t=1}\\
&=&\nonumber {N}+\{{N},F\}+ \Gamma P^{low}+\epsilon \Gamma Q^{low}+\tilde{V}^{3}+\bar{V}^{3}+ \Gamma P^{3}+ \epsilon\Gamma Q^{3}\\
&&\nonumber+\int_{0}^1\{\Gamma P^{low}+\epsilon \Gamma Q^{low},F\}\circ X_F^{t}\ \mathrm{d}{t}\\
&&\nonumber+ \{\tilde{V}^{3}+\Gamma P^{3}+ \epsilon\Gamma Q^{3},F\}^{low}+ \{\tilde{V}^{3}+\Gamma P^{3}+ \epsilon\Gamma Q^{3},F\}^{3}\\
&&\nonumber+ \{\tilde{V}^{3}+\Gamma P^{3}+ \epsilon\Gamma Q^{3},F\}^{4}+ \{\bar{V}^{3},F\}\\
&&\nonumber+\int_{0}^1(1-t)\{\{{N}+\tilde{V}^{3}+\bar{V}^{3}+\Gamma P^{3}+ \epsilon\Gamma Q^{3},F\},F\}\circ X_F^{t}\mathrm{d}{t}\\
&&\nonumber+ \left(\sum_{|v|\geq M^{+}}\widehat{P}_{v}+\epsilon \widehat{Q}_{v}+\tilde{V}^{4}+\bar{V}^{4}+\Gamma P^{4}+ \epsilon\Gamma Q^{4}\right)\circ X_F^1+ \check{V}+ R_{2}
\\
&&\nonumber+ \left(\epsilon\sum_{\mbox{either}\ |i|\ \mbox{or}\ |j|\ \geq\ \mathbb{L}^+\ \mbox{but not both}}\ f_{\lfloor i,j\rceil}(I^0+\rho,\theta)\right)\circ X_F^1 \\
\nonumber&=&\label{N4}N_+ + P_+ + \check{V} + \epsilon\sum_{\lfloor i,j\rceil|dist(\lfloor i,j\rceil,0)\geq\mathbb{L}^{+}}f_{\lfloor i,j\rceil}(I^{0}_{i}+\rho_{i},I^{0}_{j}+\rho_{j},\theta_{i},\theta_{j}),
\end{eqnarray}
where
\begin{equation}\label{N5}
N_+ = \hat{e} +{N}  + [{P}^{2}] +\epsilon [{Q}^{2}]+ [{U}],
\end{equation}
with $ \hat{e} = [{P}^{0}]+\epsilon [{Q}^{0}] + \langle \omega(+), a\rangle $,\ the $ [{P}^{j}] + \epsilon[{Q}^{j}]$,\ $ [{U}] $ being respectively the $0$-th Fourier coefficients of $ {P}^{j}+ \epsilon{Q}^{j}$ $ (j= 0,1,2)$,\ ${U}$,\ and
\begin{eqnarray}\label{N6}
P_+ :&=&\tilde{V}^{3}+\bar{V}^{3}+\Gamma P^{3}+ \epsilon\Gamma Q^{3}+\tilde{V}^{4}+\bar{V}^{4}+\Gamma P^{4}+ \epsilon\Gamma Q^{4}\\
&&\nonumber+ \{\tilde{V}^{3}+\Gamma P^{3}+ \epsilon\Gamma Q^{3},F\}^{3}+\{\tilde{V}^{3}+\Gamma P^{3}+ \epsilon\Gamma Q^{3},F\}^{4}\\
&&\nonumber+ \{\tilde{V}^{4}+\Gamma P^{4}+ \epsilon\Gamma Q^{4},F\}^{3}+\{\tilde{V}^{4}+\Gamma P^{4}+ \epsilon\Gamma Q^{4},F\}^{4}\\
&&\nonumber+ \int_{0}^1\{\Gamma P^{low}+\epsilon \Gamma Q^{low},F\}\circ X_F^{t}\ \mathrm{d}{t}\\
&&\nonumber+\int_{0}^1(1-t)\{\{{N}+\tilde{V}+\bar{V}+\Gamma P^{high}+\epsilon\Gamma Q^{high} ,F\},F\}\circ X_F^{t}\ \mathrm{d}{t}
\\
&&\nonumber+ \left(\sum_{|v|\geq M^{+}}\widehat{P}_{v}+\epsilon \widehat{Q}_{v}\right)\circ X_F^1+ \{{N}^{1},F\} + \{\bar{V}^{3},F\}\\
&&\nonumber+ \left(\epsilon\sum_{\mbox{either}\ |i|\ \mbox{or}\ |j|\ \geq\ \mathbb{L}^+\ \mbox{but not both}}\ f_{\lfloor i,j\rceil}(I^0+\rho,\theta)\right)\circ X_F^1,
\end{eqnarray}
where $ P_+ $ depends only on $ \rho_{j}, \theta_{j}$ for $ |j|\leq \mathbb{L}^{+}$.

\subsection{The solvability of the homological equations}In this subsection, we will estimate
the solutions of the homological equations.\ To avoid a flood of constants we will write $ a\lessdot b$,\ if there exists a constant $ C \geq 1$ depending only on $ \alpha,\gamma$ such that $ a \leq Cb $.\  Moreover,\ one has the following estimates.\\
Recalling that
\begin{eqnarray}
\nonumber P^{low}(\rho,\theta)&=& P^{0}(\theta)+ \langle P^{1}(\theta),\rho\rangle + \langle P^{2}(\theta)\rho,\rho\rangle\\
\nonumber &=& \sum_{0\leq |j|\leq 1}P^{0\theta_{j}}(\theta)+ \sum_{0\leq |j|\leq 1}P^{1\rho_{j}}(\theta)\rho_{j} + \sum_{0\leq |j|\leq 1}P^{2\rho_{0}\rho_{j}}(\theta)\rho_{0}\rho_{j},
\end{eqnarray}
where $ P^{0\theta_{j}}(\theta) = P^{0\theta_{j}}(\theta_{j-1},\theta_{j},\theta_{j+1})$, one has the estimates
\begin{eqnarray}
\nonumber|P^{0\theta_{j}}(\theta)| &\leq & \epsilon,\\
\nonumber|P^{1\rho_{j}}(\theta)| &\leq & \epsilon,\\
\nonumber|P^{2\rho_{j}\rho_{j}}(\theta)| &\leq & \epsilon,\\
\nonumber|P^{2\rho_{0}\rho_{j}}(\theta)| &\leq & \epsilon
\end{eqnarray}
for $ 0\leq |j| \leq 1$.\\
Similarly,\ since
\begin{eqnarray}
\nonumber Q^{low}(\rho,\theta)&=& Q^{0}(\theta)+ \langle Q^{1}(\theta),\rho\rangle + \langle Q^{2}(\theta)\rho,\rho\rangle\\
\nonumber &=&\sum_{1\leq |j|\leq \mathbb{L}^+-1}Q^{0\theta_{j}}(\theta)+ \sum_{1\leq |j|\leq \mathbb{L}^+-1}Q^{1\rho_{j}}(\theta)\rho_{j} \\
\nonumber&&+ \sum_{\lfloor i,j\rceil|1\leq |i|,|j|\leq \mathbb{L}^+-1}Q^{2\rho_{i}\rho_{j}}(\theta)\rho_{i}\rho_{j},
\end{eqnarray}
where $ Q^{0\theta_{j}}(\theta) = Q^{0\theta_{j}}(\theta_{j-1},\theta_{j},\theta_{j+1})$,\ one obtains
\begin{eqnarray}
\nonumber|Q^{0\theta_{j}}(\theta)| &\leq & |I^{0}_{|j|-1}|^{5},\\
\nonumber|Q^{1\rho_{j}}(\theta)| &\leq & |I^{0}_{|j|-1}|^{4},\\
\nonumber|Q^{2\rho_{j}\rho_{j}}(\theta)| &\leq & |I^{0}_{|j|-1}|^{3},\\
\nonumber|Q^{2\rho_{i}\rho_{j}}(\theta)| &\leq& |I^{0}_{|j|-1}|^{3}, \mbox{for}\ |i-j|\leq1,
\end{eqnarray}
for $ 1\leq |i|,|j|\leq \mathbb{L}^+-1 $.
\numberwithin{equation}{section}
\begin{lemma}\label{N7*}
For the definition of the operator $ \bar{\Omega}(k): \mathbb{C}^{\mathbb{Z}}\rightarrow\mathbb{C}^{\mathbb{Z}}$ with the norm $ |||\cdot|||$ satisfying
\begin{eqnarray}\label{N6*}
|||\bar{\Omega}(k)|||\leq \kappa_{1}, |||\bar{\Omega}^{-1}(k)|||\leq \kappa_{2},\forall\ k,
\end{eqnarray}
we have
\begin{eqnarray}\label{N6**}
|{\Omega}(k)|\leq 2\kappa_{1}, |{\Omega}^{-1}(k)|\leq 2\kappa_{2},\forall\ k,
\end{eqnarray}
where $ |\cdot|$ denotes the sup-norm for any finite matrices.
\end{lemma}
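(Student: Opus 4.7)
\textbf{Proof plan for Lemma \ref{N7*}.} The plan is to pass from the operator norm $|||\cdot|||$ on the weighted-$\ell^{1}$ space $(\mathbb{C}^{\mathbb{Z}},\|\cdot\|)$ to the entrywise sup-norm, by probing with standard basis vectors and then exploiting the symmetry of the Hessian $\Omega$ and of its inverse. Write $w_{j}=\exp(|j|^{1+\alpha})$, so that $\|x\|=\sum_{j}w_{j}|x_{j}|$.

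First I would extract from the hypothesis $|||\bar{A}|||\leq \kappa$ the entrywise bound
$$
|A_{ij}|\leq \kappa\,\frac{w_{j}}{w_{i}}
$$
for every index pair $(i,j)$. This comes from plugging $x=e_{j}$ (the standard basis vector) into the definition of $|||\cdot|||$: since $\|e_{j}\|=w_{j}$ and $\|\bar{A}e_{j}\|=\sum_{i}w_{i}|A_{ij}|\geq w_{i}|A_{ij}|$, the ratio $\|\bar{A}e_{j}\|/\|e_{j}\|$ gives the claimed inequality. (In fact this test saturates the operator norm, yielding the formula $|||\bar{A}|||=\sup_{j}w_{j}^{-1}\sum_{i}w_{i}|A_{ij}|$, but the pointwise bound alone is enough here.)

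Second, I would apply this to $A=\Omega(k)$ and to $A=\Omega^{-1}(k)$. By $(\mathcal{A}_{0})$ the matrix $\Omega(k)=\partial^{2}H_{0}(I^{0})/\partial I^{2}\big|_{\text{box}}$ is the restriction of a Hessian of an analytic function, hence symmetric, so $\Omega_{ij}=\Omega_{ji}$; the inverse of a symmetric invertible matrix is again symmetric. Given any $(i,j)$ with $|i|,|j|\leq \mathbb{L}^{k}-1$, I may relabel so that $|i|\geq|j|$, which forces $w_{j}/w_{i}\leq 1$, and then
$$
|\Omega_{ij}|=|\Omega_{ji}|\leq \kappa_{1}\cdot\frac{w_{j}}{w_{i}}\leq \kappa_{1}\leq 2\kappa_{1}.
$$
The same argument applied to $\Omega^{-1}(k)$ (symmetric, with operator norm $\leq \kappa_{2}$) gives $|\Omega^{-1}(k)_{ij}|\leq \kappa_{2}\leq 2\kappa_{2}$. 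Taking suprema over the finite box yields both inequalities of the lemma.

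The only real subtlety, and what I would flag as the main (if small) obstacle, is the symmetry step. For a generic matrix one cannot pass from a weighted-$\ell^{1}$ operator-norm bound to a uniform sup-norm bound, because $w_{j}/w_{i}=\exp(|j|^{1+\alpha}-|i|^{1+\alpha})$ grows super-exponentially when $|j|\gg |i|$; it is precisely the symmetry $\Omega_{ij}=\Omega_{ji}$ (and the analogous one for $\Omega^{-1}$) that lets us replace any ``bad'' pair by its transpose, placing the index of larger absolute value in the row and killing the dangerous weight ratio. The harmless loss of a factor $2$ in the stated bound simply reflects that the authors allow room for this reduction.
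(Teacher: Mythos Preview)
Your proposal is correct and follows essentially the same route as the paper: probe the weighted-$\ell^{1}$ operator norm with unit vectors $e_{j}$ to extract the column identity $\sum_{i}w_{i}|A_{ij}|\leq \kappa\,w_{j}$, then invoke the symmetry $\Omega_{ij}=\Omega_{ji}$ (and likewise for $\Omega^{-1}$) to swap the roles of $i$ and $j$ and kill the bad weight ratio. The only cosmetic difference is that the paper works out the tridiagonal case by hand---keeping the full column sum $|\Omega_{\{j-1\}j}|+|\Omega_{jj}|+|\Omega_{\{j+1\}j}|$ and combining the bound at column $j$ with the bound at the neighboring column via symmetry to reach exactly $2\kappa_{1}$---whereas you extract individual entries and then pass to the sup; your argument is the cleaner abstraction of theirs and yields the same conclusion.
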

\begin{proof}
For any given $k$,\ the matrix ${\Omega}(k)$ is symmetric by the definition of ${\Omega}(k)$.\ For any $j$,\
take $ I = (0,...,e^{-|j|^{1+\alpha}},0...)$ with $ ||I|| =0$.\ Then one has
\begin{eqnarray}
\nonumber||{\Omega}(k)I|| &=& |\Omega_{\{j-1\}j}|e^{|j-1|^{1+\alpha}-|j|^{1+\alpha}}+ |\Omega_{jj}|+|\Omega_{\{j+1\}j}|e^{|j+1|^{1+\alpha}-|j|^{1+\alpha}}\leq \kappa_{1},
\end{eqnarray}
which implies
\begin{eqnarray}
\nonumber |\Omega_{jj}|+|\Omega_{\{j+1\}j}|\leq \kappa_{1},\ j \geq 1,
\end{eqnarray}
or
\begin{eqnarray}
\nonumber |\Omega_{\{j-1\}j}|+|\Omega_{jj}|\leq \kappa_{1},\ j \leq -1.
\end{eqnarray}
Particularly, for $ j=0 $,\ we have
\begin{eqnarray}
\nonumber |\Omega_{\{-1\}0}|+|\Omega_{00}|+ |\Omega_{01}|\leq \kappa_{1}.
\end{eqnarray}
That is,\ one also has
\begin{eqnarray}
\nonumber |\Omega_{\{j-1\}\{j-1\}}|+|\Omega_{j\{j-1\}}|\leq \kappa_{1},\ j\geq 1,
\end{eqnarray}
and
\begin{eqnarray}
\nonumber |\Omega_{j\{j+1\}}|+|\Omega_{\{j+1\}\{j+1\}}|\leq \kappa_{1},\ j\leq -1.
\end{eqnarray}
Since $ \Omega_{j\{j-1\}} =\Omega_{\{j-1\}j}$ and $ \Omega_{j\{j+1\}}= \Omega_{\{j+1\}j}$,\ one finally obtains
\begin{eqnarray}
\nonumber |\Omega_{\{j-1\}j}|+|\Omega_{jj}|+|\Omega_{\{j+1\}j}|\leq 2\kappa_{1},\ \forall\ j.
\end{eqnarray}
Therefore,\ one has
\begin{eqnarray}\nonumber
|{\Omega}(k)|\leq 2\kappa_{1}.
\end{eqnarray}
The remaining proof is similar.
\end{proof}

\begin{lemma}\label{N7}
Given some positive parameters $ 0 < \beta < \frac{1}{10},\epsilon > 0$ and $ 0 < \tilde{\sigma} < s ,0 < \sigma < r$,\ one has
\[
|\{ F,G \}|_{s-\tilde{\sigma},r-\sigma}\leq 2\epsilon^{-\frac{1+\beta}{5}}\tilde{\sigma}^{-1}\sigma^{-1}|F|_{{s},r}|G|_{{s},r},
\]
where $ F$ and $ G $ are functions which are of the form $ G = G^{\theta} + \langle G^{\rho},\rho^{\ast} \rangle +\langle G^{\rho\rho}\rho^{\ast},\rho^{\ast}\rangle$ with $ \rho^{\ast} = (\rho_{j})_{|j| \leq \mathbb{L}^+-{1}}$ of $ \mathbb{L}^+ = (\frac{1+\beta}{5}|\ln \epsilon|)^{\frac{1}{1+\alpha}}$.
\end{lemma}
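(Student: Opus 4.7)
The plan is to bound $\{F,G\}$ termwise by Cauchy's inequality, exploiting the structural fact that both $F$ and $G$ are polynomials of degree at most two in $\rho^{\ast}=(\rho_{j})_{|j|\leq \mathbb{L}^{+}-1}$. This forces $F_{\rho_{j}}=G_{\rho_{j}}=0$ whenever $|j|\geq \mathbb{L}^{+}$, so the Poisson bracket reduces to
\[
\{F,G\}=\sum_{|j|\leq \mathbb{L}^{+}-1}\bigl(F_{\theta_{j}}G_{\rho_{j}}-F_{\rho_{j}}G_{\theta_{j}}\bigr),
\]
which is a \emph{finite} sum of at most $2\mathbb{L}^{+}-1$ terms, in spite of the phase space $\mathbb{C}^{\mathbb{Z}}\times\mathbb{T}^{\mathbb{Z}}$ being infinite-dimensional. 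This truncation is what makes the estimate tractable.

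Next I would apply Cauchy's estimate on $\widetilde{\mathcal{D}}_{s-\tilde{\sigma},r-\sigma}$ variable by variable. For $\theta_{j}$, shifting by any complex number of modulus at most $\sigma$ keeps $(I,\theta)\in\widetilde{\mathcal{D}}_{s,r}$, yielding $|F_{\theta_{j}}|_{s-\tilde{\sigma},r-\sigma}\leq \sigma^{-1}|F|_{s,r}$, and similarly for $G$. For $\rho_{j}$, the weight $\exp(|j|^{1+\alpha})$ in the norm $\|\rho\|=\sum_{j}|\rho_{j}|\exp(|j|^{1+\alpha})$ restricts the Cauchy radius to $\tilde{\sigma}\exp(-|j|^{1+\alpha})$, producing $|F_{\rho_{j}}|_{s-\tilde{\sigma},r-\sigma}\leq \tilde{\sigma}^{-1}\exp(|j|^{1+\alpha})|F|_{s,r}$, and likewise for $G_{\rho_{j}}$.

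Plugging these pointwise bounds into the finite sum and applying the triangle inequality gives
\[
|\{F,G\}|_{s-\tilde{\sigma},r-\sigma}\leq \frac{2|F|_{s,r}|G|_{s,r}}{\tilde{\sigma}\,\sigma}\sum_{|j|\leq \mathbb{L}^{+}-1}\exp(|j|^{1+\alpha}),
\]
so everything reduces to checking $\sum_{|j|\leq \mathbb{L}^{+}-1}\exp(|j|^{1+\alpha})\leq \epsilon^{-(1+\beta)/5}$. This sum is essentially geometric: the consecutive ratios $\exp((j+1)^{1+\alpha}-j^{1+\alpha})\sim \exp((1+\alpha)j^{\alpha})$ grow super-polynomially, so the sum is bounded by $C\cdot\exp((\mathbb{L}^{+}-1)^{1+\alpha})$ for a constant depending only on $\alpha$. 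The super-exponential gap $\exp((\mathbb{L}^{+})^{1+\alpha}-(\mathbb{L}^{+}-1)^{1+\alpha})\sim \exp((1+\alpha)(\mathbb{L}^{+})^{\alpha})$ then dwarfs both $C$ and the polynomial prefactor $\mathbb{L}^{+}\sim|\ln\epsilon|^{1/(1+\alpha)}$ once $\epsilon$ is small; finally the definition $(\mathbb{L}^{+})^{1+\alpha}=\tfrac{1+\beta}{5}|\ln\epsilon|$ converts $\exp((\mathbb{L}^{+})^{1+\alpha})$ into $\epsilon^{-(1+\beta)/5}$, delivering the target bound.

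The one point demanding care is the bookkeeping of the weighted Cauchy radius in the action directions: each $\partial_{\rho_{j}}$ costs a factor $\exp(|j|^{1+\alpha})$, and the scale $\mathbb{L}^{+}$ is engineered precisely so that this cost, accumulated up to the cut-off, becomes the clean power $\epsilon^{-(1+\beta)/5}$ appearing in the lemma. Apart from this matching of the weight against $\mathbb{L}^{+}$, the argument is a routine Cauchy estimate plus a geometric-type summation, so I do not anticipate further obstacles.
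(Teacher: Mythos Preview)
Your proposal is correct and follows essentially the same route as the paper: reduce the Poisson bracket to the finite sum over $|j|\le \mathbb{L}^{+}-1$, apply the weighted Cauchy estimates $|F_{\theta_j}|\le \sigma^{-1}|F|$ and $|G_{\rho_j}|\le \tilde\sigma^{-1}e^{|j|^{1+\alpha}}|G|$, and then absorb $\sum_{|j|\le \mathbb{L}^{+}-1}e^{|j|^{1+\alpha}}$ into $e^{(\mathbb{L}^{+})^{1+\alpha}}=\epsilon^{-(1+\beta)/5}$. If anything, you are more explicit than the paper about why the polynomial prefactor from the sum is swallowed by the super-exponential gap $e^{(\mathbb{L}^{+})^{1+\alpha}-(\mathbb{L}^{+}-1)^{1+\alpha}}$, a step the paper leaves implicit.
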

\begin{proof}
Let
\[
F = F^{\theta} + \langle F^{\rho},\rho^{\ast} \rangle +\langle F^{\rho\rho}\rho^{\ast},\rho^{\ast}\rangle
\]
and
\[
G = G^{\theta} + \langle G^{\rho},\rho^{\ast} \rangle +\langle G^{\rho\rho}\rho^{\ast},\rho^{\ast}\rangle.
\]
Recall that
\[
\{ F,G \} = \langle F_{\theta},G_{\rho} \rangle -\langle F_{\rho},G_{\theta} \rangle.
\]
Considering the term $ \langle F_{\theta},G_{\rho} \rangle $,\ we have
\begin{eqnarray}
\nonumber|\langle F_{\theta},G_{\rho} \rangle|_{s-\tilde{\sigma},r-\sigma}&=&\left|\sum_{j\in\mathbb{Z}} \frac{\partial F}{\partial\theta_{j}}\frac{\partial G}{\partial\rho_{j}}\right|_{s-\tilde{\sigma},r-\sigma}\\
\nonumber &\leq& \sum_{|j| \leq \mathbb{L}^+-{1}} \tilde{\sigma}^{-1}e^{|j|^{1+\alpha}}|G|_{{s},r-\sigma}|F_{\theta_{j}}|_{s-\tilde{\sigma},r-\sigma}\\
\nonumber&\leq& \tilde{\sigma}^{-1}e^{|\mathbb{L}^+-{1}|^{1+\alpha}}\sigma^{-1}|G|_{{s},r-\sigma}|F|_{s-\tilde{\sigma},r}\\
\nonumber&\leq& \epsilon^{-\frac{1+\beta}{5}}\tilde{\sigma}^{-1}\sigma^{-1}|G|_{{s},r-\sigma}|F|_{s-\tilde{\sigma},r}.
\end{eqnarray}
Similarly, one has
\begin{eqnarray}
\nonumber|\langle F_{\rho},G_{\theta} \rangle|_{s-\tilde{\sigma},r-\sigma}&=&\left|\sum_{j\in\mathbb{Z}} \frac{\partial F}{\partial\rho_{j}}\frac{\partial G}{\partial\theta_{j}}\right|_{s-\tilde{\sigma},r-\sigma}\\
\nonumber&\leq& \sum_{|j| \leq \mathbb{L}^+-{1}} \tilde{\sigma}^{-1}e^{|j|^{1+\alpha}}|F|_{{s},r-\sigma}|G_{\theta_{j}}|_{s-\tilde{\sigma},r-\sigma}\\
\nonumber&\leq& \tilde{\sigma}^{-1}e^{|\mathbb{L}^+-{1}|^{1+\alpha}}\sigma^{-1}|F|_{{s},r-\sigma}|G|_{s-\tilde{\sigma},r}\\
\nonumber&\leq& \epsilon^{-\frac{1+\beta}{5}}\tilde{\sigma}^{-1}\sigma^{-1}|F|_{{s},r-\sigma}|G|_{s-\tilde{\sigma},r}.
\end{eqnarray}
It follows that
\[
|\{ F,G \}|_{s-\tilde{\sigma},r-\sigma}\leq 2\epsilon^{-\frac{1+\beta}{5}}\tilde{\sigma}^{-1}\sigma^{-1}|F|_{{s},r}|G|_{{s},r}.
\]
\end{proof}

\begin{lemma}\label{N8}
Let $ \omega $ be Diophantine with $ \gamma > 0$ (see ($\mathcal{A}_{1}$) for $ k=1 $),\ and choose $ \mathbb{L}^+ = (\frac{1+\beta}{5}|\ln \epsilon|)^{\frac{1}{1+\alpha}} $.\ Then for any $ 0< s < 1 ,r > 0 $ and $ 0 < \sigma < \frac{1}{5}r $,\ the solutions of the homological equations which are given by (\ref{aa}),\ satisfy
\begin{eqnarray}
\label{014}|{F}^{0}|_{r-\sigma}&\lessdot& \frac{1}{\sigma^{2\mathbb{L}^+-1}}\epsilon^{1-\gamma},\\
\label{015}| F^{1}|_{r-3\sigma}&\lessdot& \frac{1}{\sigma^{4\mathbb{L}^+-1}}\epsilon^{1-2\gamma},\\
\label{016}| F^{2}|_{r-5\sigma}&\lessdot& \frac{1}{\sigma^{6\mathbb{L}^+-1}}\epsilon^{1-3\gamma}.
\end{eqnarray}
Moreover,\ one has
\begin{eqnarray}
|\tilde{F}|_{s,r-5\sigma} &\lessdot& \frac{1}{\sigma^{6\mathbb{L}^+-1}}\epsilon^{1-3\gamma},
\end{eqnarray}
where $ \tilde{F}= {F}^{0} + \langle F^{1},\rho\rangle + \langle F^{2}\rho,\rho\rangle$.
\end{lemma}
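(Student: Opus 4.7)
The plan is to solve system (\ref{**}) sequentially for $F^0$, $F^1$, $F^2$ via Fourier expansion in $\theta$, using the Diophantine estimate of $(\mathcal{A}_1)$ (for $k=1$) to control the small divisors and the component bounds on $P^0,P^1,P^2,Q^0,Q^1,Q^2$ just established. Each successive equation introduces one additional small-divisor factor $\epsilon^{-\gamma}$ together with Cauchy losses in powers of $\sigma^{-1}$ coming from the $\partial_\theta F$-terms on the right-hand side; this accounts precisely for the progressive degradation visible in (\ref{014})--(\ref{016}).

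First, I would solve the first equation of (\ref{**}) for $F^0$. Its non-zero Fourier modes give $\hat F^0_v = (\widehat{\Gamma P^0}_v + \epsilon\, \widehat{\Gamma Q^0}_v)/(i\langle \omega(+), v\rangle)$ for $0 < |v| < M^+$. Real-analyticity and the component bounds above yield $|\widehat{\Gamma P^0}_v|,\,\epsilon|\widehat{\Gamma Q^0}_v| \lessdot \epsilon\, e^{-|v|r}$ (the super-exponential decay of $I^0$ controls the $Q^0$-piece), and combining with $|\langle \omega(+), v\rangle| \geq \epsilon^{\gamma}$ gives $|\hat F^0_v| \lessdot \epsilon^{1-\gamma} e^{-|v|r}$. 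Summing over $v$ in the relevant multi-index lattice on the box of size $\mathbb{L}^+$ and absorbing $e^{-|v|\sigma}$ as polynomial losses in $\sigma^{-1}$ produces (\ref{014}). The zero Fourier modes of the first two equations couple through $\tilde\Omega^{-1}$ (well-defined by Lemma \ref{N7*}) to pin down the vector $a$ so that the normal-form linear part is preserved.

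Next, I would substitute $F^0$ into the second equation to solve for $F^1$: its non-zero modes give $\hat F^1_v = (-i\tilde\Omega v\, \hat F^0_v + \widehat{\Gamma P^1}_v + \epsilon\, \widehat{\Gamma Q^1}_v)/(i\langle \omega(+), v\rangle)$. The $F^0$-contribution carries a second Diophantine factor and a derivative weight $|v|$; using $|\tilde\Omega| \leq 2\kappa_1$ from Lemma \ref{N7*} one obtains $|\hat F^1_v| \lessdot \epsilon^{1-2\gamma} |v|\, e^{-|v|r}$. Summing the Fourier series with the extra $|v|$-weight costs additional powers of $\sigma^{-1}$, yielding (\ref{015}). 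Applying the same scheme to the third equation, where $\partial_\theta F^1$ is now the dominant source and $U$ is controlled by Lemma \ref{N7} applied to the Poisson bracket $\{\tilde V^3+\Gamma P^3+\epsilon\Gamma Q^3,\; F^0+\langle a,\theta\rangle\}$, introduces a third factor $\epsilon^{-\gamma}$ and two more powers of $\sigma^{-1}$, giving (\ref{016}).

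Finally, the bound on $\tilde F = F^0 + \langle F^1, \rho\rangle + \langle F^2 \rho, \rho\rangle$ on $\mathcal{D}_{s, r-5\sigma}$ follows from the three component estimates: since $\|\rho\| < s < 1$, the $F^2$-term governs the overall size, giving the claimed bound $\lessdot \epsilon^{1-3\gamma}/\sigma^{6\mathbb{L}^+-1}$. The main technical obstacle is the careful bookkeeping of the accumulated powers of $\epsilon^{-\gamma}$ and $\sigma^{-1}$ across the cascade, so that the Fourier summations, the Cauchy derivative losses, and the vector/matrix norm combinations line up to produce exactly the exponents $2\mathbb{L}^+-1$, $4\mathbb{L}^+-1$, $6\mathbb{L}^+-1$. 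A secondary subtlety is verifying that the $U$-contribution in the third equation does not worsen the $F^2$ estimate --- this is precisely where the Poisson bracket estimate of Lemma \ref{N7} enters.
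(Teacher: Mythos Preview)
Your approach is essentially the paper's: solve (\ref{**}) sequentially by Fourier expansion, invoking $(\mathcal{A}_1)$ for the small divisors and Cauchy estimates for the $\partial_\theta F$-terms fed forward, with the $\sigma$- and $\epsilon^{-\gamma}$-losses accumulating exactly as you describe. The determination of $a$ from the zero mode of the second equation via $\tilde\Omega^{-1}$ is also how the paper proceeds.

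One point needs adjustment. You propose to control $U$ through Lemma \ref{N7}, but that lemma is formulated for functions at most quadratic in $\rho$, whereas $\tilde V^3,\Gamma P^3,\epsilon\Gamma Q^3$ are cubic; moreover $\langle a,\theta\rangle$ is unbounded on the strip, so it cannot be inserted into a sup-norm bracket estimate. Even waiving these formal issues, Lemma \ref{N7} carries the factor $\epsilon^{-(1+\beta)/5}$, which would degrade the $U$-bound and hence (\ref{016}) below the claimed $\epsilon^{1-3\gamma}$. The paper instead estimates $U$ by a direct component-wise computation: since $F^0+\langle a,\theta\rangle$ is independent of $\rho$, the bracket collapses to $-\sum_l \partial_{\rho_l}(\tilde V^3+\Gamma P^3+\epsilon\Gamma Q^3)\,(\partial_{\theta_l}F^0+a_l)$, and the matrix entries of $U$ are then bounded by $|\partial_\theta F^0|+|a|\lessdot \sigma^{-2\mathbb{L}^+}\epsilon^{1-\gamma}$ using (\ref{1010}) together with $|a|\lessdot\epsilon$. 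This yields $|U|_{r-2\sigma}\lessdot \sigma^{-2\mathbb{L}^+}\epsilon^{1-\gamma}$, which is subdominant to $|\tilde F^1|$ in the third equation and therefore does not spoil (\ref{016}).
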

\begin{proof}
First of all,\ we consider the first equation of (\ref{**}).\ From (\ref{aa}),\ we can solve the equation
\[
\partial_{\omega}F^{0}= \Gamma{P}^{0}+ \epsilon\Gamma{Q}^{0}-([{P}^{0}] + \epsilon[{Q}^{0}] +\langle \omega(+), a\rangle ).
\]
That is,\ the solution of the homological equation is given by
\begin{eqnarray}
\label{009}F^{0}(\theta)&=&\sum_{0< |v| < M^{+}} \frac{\widehat{P}_{v}^{0}+\epsilon \widehat{Q}_{v}^{0}}{\mathbf{i}\langle \omega(+),v\rangle}e^{\mathbf{i}\langle v,\theta\rangle}.
\end{eqnarray}
From the Diophantine condition ($\mathcal{A}_{1}$),\ one has
\begin{eqnarray}
\nonumber|F^{0}(\theta)|_{r-\sigma} &\leq& \epsilon^{-\gamma}\sum_{0< |v| < M^{+}}\left(\epsilon +\epsilon \underset{|j|\leq \mathbb{L}^+-1}{\sum}|I^{0}_{|j|-1}|^{5}\right) e^{-r|v|}e^{(r-\sigma)|v|}\\
\nonumber &\lessdot& \frac{1}{\sigma^{2\mathbb{L}^+-1}}\epsilon^{1-\gamma},\  0 < \sigma < r,
\end{eqnarray}
which finishes the proof of (\ref{014}).\\
Moreover,\ combining with Cauchy estimate
\[
|\partial_{\theta}F^{0}(\theta)|_{r-2\sigma} \lessdot \frac{1}{\sigma}|F^{0}(\theta)|_{r-\sigma},
\]
where $ |\partial_{\theta}F^{0}(\theta)|_{r}= \underset{|j|\leq \mathbb{L}^+-1}{\max}\underset{\theta\in \mathbb{T}_{r}^{\mathbb{Z}}}{\sup}|\partial_{\theta_{j}}F^{0}(\theta)|$,\ one finally obtains
\begin{eqnarray}
\label{1010}|\partial_{\theta}F^{0}(\theta)|_{r-2\sigma}  &\lessdot& \frac{\epsilon^{1-\gamma}}{\sigma^{2\mathbb{L}^+}}.
\end{eqnarray}

Next we consider the second equation of (\ref{**}).\ Since $ \widehat{F}_{0}^{1}=0 $,\ we can choose a vector $ a $ such that $ \tilde{\Omega}a -[{P}^{1}]-\epsilon [{Q}^{1}]=0$.\ Then we have
\begin{equation}\label{012}
a = (\tilde{\Omega})^{-1}([{P}^{1}] +\epsilon [{Q}^{1}]),
\end{equation}
with the estimate
\begin{eqnarray}
\nonumber |a| &=& |(\tilde{\Omega})^{-1}||[{P}^{1}] +\epsilon [{Q}^{1}]|\\
\nonumber &\leq& 2{\kappa_{2}}(|[{P}^{1}]| +\epsilon |[{Q}^{1}]|)\\
\nonumber &\lessdot& \epsilon.
\end{eqnarray}
Let $ \tilde{F}^{0}= -\tilde{\Omega}\partial_{\theta}F^{0}$.\ We get
\[
|\tilde{F}^{0}(\theta)|_{r-2\sigma}\leq |\tilde{\Omega}||\partial_{\theta}F^{0}(\theta)|_{r-2\sigma}\lessdot \frac{1}{\sigma^{2\mathbb{L}^+}}\epsilon^{1-\gamma},
\]
where the last equality is based on (\ref{1010}).\\
Similarly to (\ref{009}),\ we can solve the equation
\begin{eqnarray}
\label{010}F^{1}(\theta)&=& \sum_{0< |v| < M^{+}}\frac{\widehat{P}_{v}^{1}+\epsilon \widehat{Q}_{v}^{1}+\widehat{\tilde{F}^{0}}_{v}}
{\mathbf{i}\langle \omega(+),v\rangle}e^{\mathbf{i}\langle v,\theta\rangle}.
\end{eqnarray}
and easily obtain the estimate
\begin{eqnarray}
\nonumber|F^{1}(\theta)|_{r-3\sigma} &\leq& \epsilon^{-\gamma}\sum_{0< |v| < M^{+}}\left(\epsilon +\frac{1}{\sigma^{2\mathbb{L}^+}}\epsilon^{1-\gamma}\right) e^{-(r-2\sigma)|v|}e^{(r-3\sigma)|v|}\\
\nonumber &\lessdot& \frac{1}{\sigma^{4\mathbb{L}^+-1}}\epsilon^{1-2\gamma},
\end{eqnarray}
which finishes the proof of (\ref{015}).\\
Furthermore,\ we have
\begin{eqnarray}
\label{1011}|\partial_{\theta}F^{1}(\theta)|_{r-4\sigma}\lessdot \frac{1}{\sigma^{4\mathbb{L}^+}}\epsilon^{1-2\gamma},
\end{eqnarray}
where $ |\partial_{\theta}F^{1}(\theta)|_{r}= \max_{i} \underset{|j|\leq \mathbb{L}^+-1}{\sum}\underset{\theta\in \mathbb{T}_{r}^{\mathbb{Z}}}{\sup}|\partial_{\theta_{j}}F^{1\rho_{i}}(\theta)|$.

Now we turn to the third equation of (\ref{**}).\ Let $ \tilde{F}^{1}= -\tilde{\Omega}\partial_{\theta}F^{1}$.\ On $ \mathcal{D}_{s-\tilde{\sigma},r-4\sigma}$,\  we have
\[
|\widetilde{F}^{1}(\theta)|_{r-4\sigma}\leq |\tilde{\Omega}||\partial_{\theta}F^{1}(\theta)|_{r-4\sigma}\lessdot \frac{1}{\sigma^{4\mathbb{L}^+}}\epsilon^{1-2\gamma}.
\]
Since
\[
\langle U\rho,\rho\rangle = \{\tilde{V}^{3} + \Gamma P^{3}+\epsilon \Gamma Q^{3},F^{0}+ \langle a,\theta\rangle\},
\]
and
\begin{eqnarray}
\nonumber \{\tilde{V}^{3},F^{0}+ \langle a,\theta\rangle\} &=& \sum_{i,j,l}\frac{\partial^{3}\tilde{V}^{3}}
{\partial \rho_{i}\partial \rho_{j}\partial \rho_{l}}(\partial_{\theta_{l}}F^{0}+ a_{l})\rho_{i}\rho_{j},
\end{eqnarray}
we easily estimate that
\begin{eqnarray}
\label{028} &&|\{\tilde{V}^{3},F^{0}+ \langle a,\theta\rangle\}^{\rho\rho}|\\
\nonumber &\leq& \sup_{j}\left(\left|\frac{\partial^{3}\tilde{V}^{3}}{\partial \rho_{j-1}^{2}\partial \rho_{j}}\right|
+\left|\frac{\partial^{3}\tilde{V}^{3}}{\partial \rho_{j}^{3}}\right|
+
\left|\frac{\partial^{3}\tilde{V}^{3}}{\partial \rho_{j+1}^{2}\partial \rho_{j}}\right|\right)
(|\partial_{\theta}F^{0}|+ |a|)\\
\nonumber &\lessdot & |\partial_{\theta}F^{0}|+ |a|
\end{eqnarray}
and
\begin{eqnarray}
\label{029} &&|\{\Gamma P^{3}+\epsilon \Gamma Q^{3},F^{0}+ \langle a,\theta\rangle\}^{\rho\rho}| \\
\nonumber&=& \max_i\sum_{|j|\leq \mathbb{L}^+-1}\left|\sum_{l}\frac{\partial^{3}(\Gamma P^{3}+\epsilon \Gamma Q^{3})}{\partial\rho_{i}\partial\rho_{j}\partial \rho_{l}}(\partial_{\theta_{l}}F^{0}+ a_{l})\right|\\
\nonumber &\leq& \sum_{|j|\leq \mathbb{L}^+-1}\epsilon|I^{0}_{|j|-1}|^{2}(|\partial_{\theta}F^{0}|+ |a|)\\
\nonumber &\lessdot& |\partial_{\theta}F^{0}|+ |a|.
\end{eqnarray}
From (\ref{028}) and (\ref{029}),\ one has
\begin{eqnarray}\label{036}
|U|_{r-2\sigma} &\lessdot& \frac{1}{\sigma^{2\mathbb{L}^+}}\epsilon^{1-\gamma}.
\end{eqnarray}
%and
%\[
%|U(\theta)|_{r-2\sigma} \leq |\partial_{\theta}F^{0}(\theta)|_{r-2\sigma} + |a|_{r-2\sigma} \lessdot \frac{\epsilon^{1-\gamma}}{\sigma^{2\mathbb{L}^+}}.
%\]
Similarly to (\ref{009}),\ the solution of the third equation of (\ref{**}) is given by
\begin{eqnarray}
\label{011}F^{2}(\theta)&=& \sum_{0< |v| < M^{+}}\frac{\widehat{P}_{v}^{2}+\epsilon \widehat{Q}_{v}^{2}+\widehat{\tilde{F}^{1}}_{v}+\widehat{U}_{v}}{\mathbf{i}\langle {\omega(+),v\rangle}}e^{\mathbf{i}\langle v,\theta\rangle},
\end{eqnarray}
and the corresponding estimate is
\begin{eqnarray}
\nonumber
|F^{2}(\theta)|_{r-5\sigma}\lessdot \frac{1}{\sigma^{6\mathbb{L}^+-1}}\epsilon^{1-3\gamma},
\end{eqnarray}
which finishes the proof of (\ref{016}).\\
Consequently,\ from (\ref{014}), (\ref{015}) and (\ref{016}),\ one obtains
\begin{eqnarray}
\nonumber &&|\tilde{F}|_{s,r-5\sigma}\\
 \nonumber &\leq& |F^{0}|_{r-5\sigma}+|F^{1}|_{r-5\sigma}\left(\underset{|j|\leq \mathbb{L}^+-1}{\sum}|\rho_{j}|\right)+|F^{2}|_{r-5\sigma}\left(\underset{|i|,|j|\leq \mathbb{L}^+-1}{\sum}|\rho_{i}||\rho_{j}|\right)\\
\nonumber&\lessdot& \frac{1}{\sigma^{2\mathbb{L}^+-1}}\epsilon^{1-\gamma}+ \frac{1}{\sigma^{4\mathbb{L}^+-1}}\epsilon^{1-2\gamma}\left(\underset{|j|\leq \mathbb{L}^+-1}{\sum}e^{-|j|^{1+\alpha}}\right)\\
\nonumber &&+
\frac{1}{\sigma^{6\mathbb{L}^+-1}}\epsilon^{1-3\gamma}\left(\underset{|j|\leq \mathbb{L}^+-1}{\sum}e^{-|j|^{1+\alpha}}\right)^{2}\\
\nonumber&\lessdot& \frac{1}{\sigma^{6\mathbb{L}^+-1}}\epsilon^{1-3\gamma}.
\end{eqnarray}
\end{proof}

\subsection{The derivatives of $ F $}

On $ D_{s,r-5\sigma}$,\ from Lemma \ref{N8},\ we obtain the estimate
\begin{eqnarray}
\label{050001*} |\tilde{F} |_{s,r-5\sigma}\lessdot \frac{1}{\sigma^{6\mathbb{L}^+-1}}\epsilon^{1-3\gamma}.
\end{eqnarray}
From the equation of motion
\begin{eqnarray}
\dot{\theta}= F_{\rho}(\rho,\theta), \dot{\rho}= F_{\theta}(\rho,\theta)= \tilde{F}_{\theta}(\rho,\theta)+ a,
\end{eqnarray}
vector $ a $ has to belong to $ \mathbb{C}^{\mathbb{Z}}$.\ That is,\ we have the estimate
\begin{eqnarray}
\label{1012}\| a\| &=& \sum_{|j|\leq \mathbb{L}^+-1}|\sum_{|i|\leq \mathbb{L}^+-1}(\tilde{\Omega}_{ij})^{-1}( [{P}^{1}]^{\rho_{i}}+\epsilon [{Q}^{1}]^{\rho_{i}})|e^{|j|^{1+\alpha}}\\
\nonumber&\leq& {\kappa_{2}}\sum_{|j|\leq \mathbb{L}^+-1}(|[{P}^{1}]|+\epsilon |[{Q}^{1}]|)e^{|j|^{1+\alpha}}\\
\nonumber&\lessdot& {\kappa_{2}}( \sum_{0\leq|j|\leq1}\epsilon + \epsilon\sum_{1\leq |j|\leq \mathbb{L}^+-1}e^{|j|^{1+\alpha}})\\
\nonumber &\lessdot& {\kappa_{2}}(\epsilon
+\epsilon^{\frac{4-\beta}{5}}\sum_{1\leq|j|\leq\mathbb{L}^{+}-1}e^{-(\mathbb{L}^{+})^{1+\alpha}+|j|^{1+\alpha}})\\
\nonumber &\lessdot& {\kappa_{2}}(\epsilon+ 2\mathbb{L}^{+}\epsilon^{\frac{4-\beta}{5}})\\
\nonumber &\lessdot& \epsilon^{\frac{4}{5}-\gamma-\frac{1}{5}\beta}.
\end{eqnarray}
Hence,\ on $ \mathcal{D}_{s,r-6\sigma}$, one has
\begin{eqnarray}
\nonumber{\|}F_{\theta}{\|} &=& \underset{|j|\leq \mathbb{L}^+-1}{\sum}|\tilde{F}_{\theta_{j}}+a_{j}|e^{|j|^{1+\alpha}}\\
\nonumber&\leq& \underset{|j|\leq \mathbb{L}^+-1}{\sum}|\tilde{F}_{\theta_{j}}|e^{|j|^{1+\alpha}}+ \|a\|\\
\nonumber&\leq& \underset{|j|\leq \mathbb{L}^+-1}{\sum}|\tilde{F}_{\theta_{j}}|e^{(\mathbb{L}^+-1)^{1+\alpha}}+ \|a\|\\
\nonumber&\lessdot& \epsilon^{-\frac{1+\beta}{5}}\left(\sigma^{-1}|\tilde{F}|_{s,r-5\sigma}\right)+ \epsilon^{\frac{4}{5}-\gamma-\frac{1}{5}\beta}\\
\nonumber& \lessdot & \frac{1}{\sigma^{6\mathbb{L}^+}}\epsilon^{\frac{4}{5}-3\gamma-\frac{1}{5}\beta}.
\end{eqnarray}
Similarly,\ on $ \mathcal{D}_{s-\tilde{\sigma},r-5\sigma}$,\ we obtain the estimate
\begin{eqnarray}
\nonumber|F_{\rho}|_{\infty} &=&\sup_j|F_{\rho_{j}}|\\
\nonumber&\leq& \sup_j \tilde{\sigma}^{-1}e^{|j|^{1+\alpha}}|\tilde{F}|_{s,r-5\sigma}\\
\nonumber&\lessdot& \epsilon^{-\frac{1+\beta}{5}}\left(\frac{1}{\tilde{\sigma}\sigma^{6\mathbb{L}^+-1}}\epsilon^{1-3\gamma}\right)\\
\nonumber& \lessdot & \frac{1}{\tilde{\sigma}\sigma^{6\mathbb{L}^+-1}}\epsilon^{\frac{4}{5}-3\gamma-\frac{1}{5}\beta}.
\end{eqnarray}
Recalling the estimates $ F_{\rho},F_\theta$,\ we thus have
\[
\sigma^{-1}| F_{\rho} |_{\infty},\tilde{\sigma}^{-1}{\|}F_{\theta}{\|}\lessdot  \frac{1}{\tilde{\sigma}\sigma^{6\mathbb{L}^+}}\epsilon^{\frac{4}{5}-3\gamma-\frac{1}{5}\beta}
\]
uniformly on $ \mathcal{D}_{s-\tilde{\sigma},r-6\sigma}$.\\
Since $|(I,\theta)|_{\mathcal{P}}= \max(\|I\|,|\theta|_{\infty})$,\ note $ W =
diag(\tilde{\sigma}^{-1}I_{\Lambda},{\sigma}^{-1}I_{\Lambda})$,\ and then the above estimates are equivalent to
\[
|WX_{F}|_{\mathcal{P}}\lessdot \frac{1}{\tilde{\sigma}\sigma^{6\mathbb{L}^+}}\epsilon^{\frac{4}{5}-3\gamma-\frac{1}{5}\beta}
\]
on $ \mathcal{D}_{s-\tilde{\sigma},r-6\sigma}$.

Considering the Hamiltonian vector-field $X_{F}$ associated with $ F$,\ the time-1-map can be written as
\[
 \Psi:  \mathcal{D}_{b}= D_{s-2\tilde{\sigma},r-7\sigma} \rightarrow \mathcal{D}_{a}= D_{s-\tilde{\sigma},r-6\sigma},
\]
and the estimate
\begin{eqnarray}
\label{050002*}|W(\Psi-id)|_{\mathcal{P},\mathcal{D}_{b}} \lessdot \frac{1}{\tilde{\sigma}\sigma^{6\mathbb{L}^+}}\epsilon^{\frac{4}{5}-3\gamma-\frac{1}{5}\beta}
\end{eqnarray}
holds,\ where $ \mathcal{D}_{b}$ and $\mathcal{D}_{a}$ are the domain of $ |W\cdot|_{\mathcal{P}}$-distance.

\section{The new hamiltonian}
In views of $ (\ref{N1**}) $ and $ (\ref{013})$,\ we obtain the new Hamiltonian
\begin{eqnarray}
\label{N9} H_{+} = N_{+} + P_{+} + \check{V}+ \epsilon\sum_{\lfloor i,j\rceil|dist(\lfloor i,j\rceil,0)\geq\mathbb{L}^{+}}f_{\lfloor i,j\rceil}(I^{0}_{i}+\rho_{i},I^{0}_{j}+\rho_{j},\theta_{i},\theta_{j}),
\end{eqnarray}
where $ N_{+}$ and $ P_{+}$ are given in $ (\ref{N5}) $ and $ (\ref{N6}) $ respectively.

\subsection{The new normal form $ N_{+}$}From $ (\ref{N5})$,\ $ N_{+}$ is given by
\begin{eqnarray}
\nonumber N_+ &=& \hat{e} + {N} + \langle[P^{2}]\rho,\rho\rangle+\epsilon \langle[Q^{2}]\rho,\rho\rangle +\langle[U]\rho,\rho\rangle\\
\nonumber    &=& {e}_+ +\sum_{j\in\mathbb{Z}}\omega_{j}\rho_{j} + \frac{1}{2}\sum_{j\in\mathbb{Z},|i-j|\leq 1 }\Omega_{ij}\rho_{i}\rho_{j}+ \sum_{|i|,|j|\leq \mathbb{L}^+-1}\hat{\Omega}_{ij}\rho_{i}\rho_{j},\\
\nonumber    &=& e_+ +\langle \omega(+),\rho(+)\rangle+ \frac{1}{2}\langle \Omega^+\rho(+),\rho(+)\rangle+\sum_{|j|\geq \mathbb{L}^+}\omega_{j}\rho_{j}+ \frac{1}{2}\sum_{|i|\ \mbox{or}\ |j|\geq \mathbb{L}^+}\Omega_{ij}\rho_{i}\rho_{j},
\end{eqnarray}
where $ \hat{\Omega}_{ij}= [P^{2}]^{\rho_{i}\rho_{j}}+\epsilon [Q^{2}]^{\rho_{i}\rho_{j}}+ [U]^{\rho_{i}\rho_{j}}$ and $ \Omega_{ij}^+ = \Omega_{ij}(+) + \hat{\Omega}_{ij}$ for $ |i|,|j|\leq \mathbb{L}^+-1$.\\
It follows from (\ref{036}) that
\[
|\hat{\Omega}|\leq |[P^{2}]|+\epsilon |[Q^{2}]|+ |[U]|\lessdot \epsilon + \frac{1}{\sigma^{2\mathbb{L}^+}}\epsilon^{1-\gamma}\lessdot \epsilon^{\frac{2}{3}}.
\]
Consequently,\ since $ \Omega^{+} = \Omega(+) + \hat{\Omega}$,\ the matrix ${\Omega}^{+}$ satisfies
\begin{eqnarray}
\nonumber|\Omega^{+}| &=& |\Omega(+) + \hat{\Omega}|\\
\nonumber&=& |\Omega(+)||E + \Omega^{-1}(+)\hat{\Omega}|\\
\nonumber&\lessdot& \kappa_{1}(1+2\kappa_{2}\epsilon^{\frac{2}{3}}),
\end{eqnarray}
and its inverse $ ({\Omega}^{+})^{-1} $ satisfy
\begin{eqnarray}
\nonumber |(\Omega^{+})^{-1} |
&=& |(\Omega(+) + \hat{\Omega})^{-1}| \\
\nonumber&=& |(E+\Omega^{-1}(+)\hat{\Omega})^{-1}\Omega^{-1}(+)| \\
\nonumber&\leq& |(E+\Omega^{-1}(+)\hat{\Omega})^{-1}||\Omega^{-1}(+)|\\
\nonumber&\lessdot& \frac{{\kappa_{2}}}{1-{2\kappa_{2}}\epsilon^{\frac{2}{3}}}.
\end{eqnarray}
Moreover,\ let $$ \bar{\Omega}^{+}= \begin{pmatrix}
\Omega^{+}&0\\
0&0
\end{pmatrix}_{\infty\times\infty},
$$
and then the operator $ \bar{\Omega}^{+} : \mathbb{C}^{\mathbb{Z}}\rightarrow \mathbb{C}^{\mathbb{Z}} $ satisfies
\begin{eqnarray}
\nonumber|||\bar{\Omega}^{+}||| &=& \sup_{I,\|I\|\neq0}\frac{\|\bar{\Omega}^{+} I\|}{\|I\|}\leq \sup_{I,\|I\|\neq0}\frac{\|\Omega(+)I\|+ \|\hat{\Omega} I\|}{\|I\|}\\
\nonumber&\leq& \kappa_{1} + \sup_{I,\|I\|\neq0}\frac{\underset{|i|,|j|\leq L^+-1}{\sum}|\hat{\Omega}_{ij}||I_{i}|e^{|j|^{1+\alpha}}}{\|I\|},\\
\nonumber&\lessdot& \kappa_{1} + \sup_{I,\|I\|\neq0}\frac{\left(\underset{|j|\leq L^+-1}{\sum}\epsilon^{\frac{2}{3}-\frac{1+\beta}{5}}\right)\left(\underset{|i|\leq L^+-1}{\sum}|I_{i}|e^{|i|^{1+\alpha}}\right)}{\|I\|}\\
\nonumber&\lessdot& \kappa_{1}+\epsilon^{\frac{1}{3}}.
\end{eqnarray}
Correspondingly,\ let
$$ (\bar{\Omega}^{+})^{-1}= \begin{pmatrix}
(\Omega^{+})^{-1}&0\\
0&0
\end{pmatrix}_{\infty\times\infty},
$$
and the related operator $ (\bar{\Omega}^{+} )^{-1}: \mathbb{C}^{\mathbb{Z}}\rightarrow \mathbb{C}^{\mathbb{Z}} $ satisfies
\begin{eqnarray}
\nonumber|||(\bar{\Omega}^{+})^{-1}|||&=& \sup_{I,\|I\|\neq0}\frac{\|(\bar{\Omega}^{+})^{-1} I\|}{\|I\|}
=\sup_{I,\|I\|\neq0}\frac{\|{\Omega}^{+} I(+)\|}{\|I\|}\\
\nonumber&=& \sup_{I,\|I\|\neq0}\frac{\|(E+\Omega^{-1}(+)\hat{\Omega})^{-1}\Omega^{-1}(+) I(+)\|}{\|I\|}\\
\nonumber&\leq& \sup_{I,\|I\|\neq0}\frac{\|(E+\Omega^{-1}(+)\hat{\Omega})^{-1}b(+)\|}{\|I\|}\\
\nonumber&(& \mbox{by letting}\ b(+) = \Omega^{-1}(+) I(+))\\
\nonumber&\leq& \sup_{I,\|I\|\neq0}\frac{\|\sum^{\infty}_{k=0}(\Omega^{-1}(+)\hat{\Omega})^{k}b(+)\|}{\|I\|}\\
\nonumber&\lessdot& \frac{1}{1-{2\kappa_{2}}\epsilon^{\frac{1}{3}}}\cdot\left(\sup_{I,\|I\|\neq0}\frac{\|b(+)\|}{\|I\|}\right)\\
\nonumber&\lessdot& \frac{\kappa_{2}}{1-{2\kappa_{2}}\epsilon^{\frac{1}{3}}}.
\end{eqnarray}

\subsection{The new perturbation $ P_{+}$ }Recall that the new term $ P_{+}$ is given by $ (\ref{N6})$,\ i.e.,
\begin{eqnarray}\nonumber
P_+ :&=&
\tilde{V}^{3}+\bar{V}^{3}+\Gamma P^{3}+\epsilon\Gamma Q^{3}+\tilde{V}^{4}+\bar{V}^{4}+\Gamma P^{4}+\epsilon\Gamma Q^{4}\\
&&\nonumber+ \{\tilde{V}^{3}+\Gamma P^{3}+\epsilon\Gamma Q^{3},F\}^{3}+\{\tilde{V}^{3}+\Gamma P^{3}+\epsilon\Gamma Q^{3},F\}^{4}\\
&&\nonumber+ \{\tilde{V}^{4}+\Gamma P^{4}+\epsilon\Gamma Q^{4},F\}^{3}+\{\tilde{V}^{4}+\Gamma P^{4}+\epsilon\Gamma Q^{4},F\}^{4}\\
&&\label{018}+ \int_{0}^1\{\Gamma P^{low}+\epsilon \Gamma Q^{low},F\}\circ X_F^{t}\ \mathrm{d}{t}\\
&&\label{019}+\int_{0}^1(1-t)\{\{{N}^{0}+ {N}^{1}+\tilde{V}+\bar{V}+\Gamma P^{high}+\epsilon\Gamma Q^{high} ,F\},F\}\circ X_F^{t}\ \mathrm{d}{t}
\\
&&\label{020}+ (\sum_{|v|\geq M^{+}}\widehat{P}_{v}+\epsilon \widehat{Q}_{v})\circ X_F^1\\
&&\label{021}+ \{{N}^{1},F\} + \{\bar{V},F\} + (\epsilon\sum_{\mbox{either}\ |i|\ \mbox{or}\ |j|\ \geq\ \mathbb{L}^+\ \mbox{but not both}}f_{\lfloor i,j\rceil}(\rho,\theta))\circ X_F^1.
\end{eqnarray}
By estimates (\ref{050001*}),(\ref{050002*}),\ one has
\begin{eqnarray}
|H\circ X^{t}_{F}|_{s-2\tilde{\sigma},r-7\sigma}\leq |H|_{s-\tilde{\sigma},r-6\sigma}.
\end{eqnarray}
Hence,\ with this assumption and Lemma \ref{N7},\ one can estimate new error term $ P_+ $ by
\begin{eqnarray}
\label{022}|(\ref{018})|_{s-2\tilde{\sigma},r-7\sigma}&\leq & \left|\int_{0}^1\{\Gamma P^{low}+\epsilon \Gamma Q^{low},F\}\circ X_F^{t}\ \mathrm{d}{t}\right|_{s-2\tilde{\sigma},r-7\sigma}\\
\nonumber &\leq & \int_{0}^1 |\{\Gamma P^{low}+\epsilon \Gamma Q^{low},F\} |_{s-\tilde{\sigma},r-6\sigma}\mathrm{d}{t}\\
\nonumber &\lessdot & \epsilon^{-\frac{1+\beta}{5}}\tilde{\sigma}^{-1}(\sigma^{-1}|\tilde{F}|_{s,r-5\sigma} +|a|)|\Gamma P^{low}+\epsilon \Gamma Q^{low} |_{s,r}\\
\nonumber &\lessdot & \frac{1}{\tilde{\sigma}\sigma^{6\mathbb{L}^+}}\epsilon^{\frac{4}{5}-3\gamma-\frac{1}{5}\beta}|\Gamma P^{low}+\epsilon \Gamma Q^{low} |_{s,r}\\
\nonumber &\lessdot &  \frac{1}{\tilde{\sigma}\sigma^{6\mathbb{L}^+}}\epsilon^{\frac{9}{5}-3\gamma-\frac{1}{5}\beta},
\end{eqnarray}
and
\begin{eqnarray}
\label{024}|(\ref{020})|_{s-\tilde{\sigma},r-2\sigma}&\leq & \left|\left(\sum_{|v|\geq M^{+}}(\widehat{P}_{v}+\epsilon \widehat{Q}_{v})e^{\mathbf{i}\langle v,\theta\rangle}\right)\circ X_F^1\right|_{s-\tilde{\sigma},r-2\sigma }\\
\nonumber &\leq &  \left|\sum_{|v|\geq M^{+}}(\widehat{P}_{v}+\epsilon \widehat{Q}_{v})e^{\mathbf{i}\langle v,\theta\rangle}\right|_{ s,r-\sigma}\\
%\nonumber &\leq & \sum_{|v|\geq M^{+}}\epsilon e^{-\sigma|v|}\\
\nonumber &\leq & \epsilon\sum_{M\geq M^{+}} M^{2\mathbb{L}^{+}}e^{-\sigma M}\\
\nonumber  &\lessdot& \epsilon^{2},
\end{eqnarray}
by choosing $ M^{+}= \frac{2}{\sigma}|\ln \epsilon|$.

Similarly,\ we also obtain
\begin{eqnarray}
&&\label{023}|(\ref{019})|_{s-3\tilde{\sigma},r-8\sigma} \\
\nonumber&\leq & | \int_{0}^1(1-t)\{\{{N}^{0}+ {N}^{1}+\tilde{V}+\bar{V}+\Gamma P^{high}+\epsilon\Gamma Q^{high},F\},F\}\circ X_F^{t}\ \mathrm{d}{t}
|_{s-3\tilde{\sigma},r-8\sigma}\\
\nonumber &\lessdot & | \{\{{N}^{0}+ {N}^{1}+\tilde{V}+\bar{V}+\Gamma P^{high}+\epsilon\Gamma Q^{high},F\},F\}|_{s-2\tilde{\sigma},r-7\sigma}\\
\nonumber &&(\mbox{in view of (\ref{022})})\\
\nonumber &\lessdot & \left(\frac{1}{\tilde{\sigma}\sigma^{6\mathbb{L}^+}}\epsilon^{\frac{4}{5}-3\gamma-\frac{1}{5}\beta}\right)\left| \{{N}^{0}+ {N}^{1}+\tilde{V}+\bar{V}+\Gamma P^{high}+\epsilon\Gamma Q^{high},F\}\right|_{s-\tilde{\sigma},r-6\sigma}\\
\nonumber &\lessdot &  \left(\frac{1}{\tilde{\sigma}\sigma^{6\mathbb{L}^+}}\epsilon^{\frac{4}{5}-3\gamma-\frac{1}{5}\beta}\right)^2\left|{N} ^{0}+ {N}^{1}+\tilde{V}+\bar{V}+\Gamma P^{high}+\epsilon\Gamma Q^{high}\right|_{s,r}\\
\nonumber &\lessdot &  \left(\frac{1}{\tilde{\sigma}\sigma^{6\mathbb{L}^+}}\epsilon^{\frac{4}{5}-3\gamma-\frac{1}{5}\beta}\right)^2,
\end{eqnarray}
and
\begin{eqnarray}
\label{025}&&|(\ref{021})|_{s-\tilde{\sigma},r-2\sigma}\\
\nonumber &\leq &\left|\{{N}^{1},F\}\right|_{s-\tilde{\sigma},r-2\sigma}+ \left|\{\bar{V},F\}\right|_{s-\tilde{\sigma},r-2\sigma}\\
\nonumber&& +\left|\left(\epsilon\sum_{\mbox{either}\ |i|\ \mbox{or}\ |j|\ \geq\ \mathbb{L}^+\ \mbox{but not both}} f_{\lfloor i,j\rceil}(I^0+\rho,\theta)\right)\circ X_F^1\right|_{s-\tilde{\sigma},r-2\sigma}\\
\nonumber &\leq & 2e^{-(\mathbb{L}^{+})^{1+\alpha}}
\left(\frac{1}{\sigma^{2\mathbb{L}^+}}\epsilon^{1-3\gamma}\right) +\left|\epsilon\sum_{\mbox{either}\ |i|\ \mbox{or}\ |j|\ \geq\ \mathbb{L}^+\ \mbox{but not both}}f_{\lfloor i,j\rceil}(\rho,  \theta)\right|_{s,r-\sigma}\\
\nonumber &\leq & 2
\left(\frac{1}{\sigma^{2\mathbb{L}^+}}\epsilon^{\frac{6}{5}-3\gamma+\frac{1}{5}\beta}\right) +\epsilon\exp[{-5(\mathbb{L}^{+}-1)^{1+\alpha}}]\\
\nonumber &\lessdot &  \epsilon^{1+\beta},
\end{eqnarray}
by choosing $ \mathbb{L}^{+}= (\frac{1+\beta}{5}|\ln \epsilon|)^{\frac{1}{1+\alpha}}$.

From the above estimate (\ref{022}),(\ref{023}),(\ref{024}) and (\ref{025}) contributing to $  P^{low}_{+}$,\ one easily gets that
\begin{eqnarray}
\label{026}|P^{low}_{+}|_{s-3\tilde{\sigma},r-8\sigma}&\lessdot & \left(\frac{1}{\tilde{\sigma}\sigma^{6\mathbb{L}^+}}\epsilon^{\frac{4}{5}-3\gamma-\frac{1}{5}\beta}\right)^2 + \epsilon^{1+\beta}\lessdot \epsilon^{1+\beta},
\end{eqnarray}
if $ 0< \gamma< \frac{1}{301}$ and $ 0 < \beta < \frac{1}{10}$.\\
Similarly,\ by estimates (\ref{022}),(\ref{023}),(\ref{024}) and (\ref{025}),\ we obtain
\begin{eqnarray}
\label{027}\left|\sum^{4}_{j\geq3} P_{+}^{j}\right|_{s-3\tilde{\sigma},r-8\sigma}&\lessdot & 1+\frac{1}{\tilde{\sigma}\sigma^{6\mathbb{L}^+}}\epsilon^{\frac{4}{5}-3\gamma-\frac{1}{5}\beta}
+\left(\frac{1}{\tilde{\sigma}\sigma^{6\mathbb{L}^+}}\epsilon^{\frac{4}{5}-3\gamma-\frac{1}{5}\beta}\right)^2 .
\end{eqnarray}
Thus,\ if we choose $ \epsilon_{1} = {\epsilon^{1+\beta}}$,\ (\ref{026}) will be bounded by $ \epsilon_{1}$.

\section{iteration and convergence}
Let $ 0 < \beta < \frac{1}{10}$ and $ 0 < \gamma < \frac{1}{301}$ be constants.\ In the following,\ we display the various inductive constants in the list:
 \begin{itemize}
 \item[(1)]$\epsilon_{k}=\epsilon^{({1+\beta})^{k}}$:\ $\epsilon_{k}$ bounds the size of the interaction after $ k $ iterations;

%\item[(2)] Given $ m_{0} \in (\frac{9}{4}, \frac{5}{2}]$ and $ \mu_{0}= \frac{1}{8}$,\ $ m_{k+1}= m_{k}- \mu_{k}$ for $\mu_{k+1}=\frac{1}{2}\mu_{k}$;\ $ \mu_k$ is the amount
%by which the weight shrinks in the $ k+1 $-th step;

\item[(2)] Given $ 0 < s_{0}=s \leq 1 $,\ $s_{k+1}=s_{k}- 3\tilde{\sigma}_{k} = s_{k}- \frac{(k+1)^{-2}}{2\sum^{\infty}_{j=1}j^{-2}}s_0$ for $ k= 0,1,2,...$:
$ s_k$ measures the size of the analyticity domain in the action variables after $ k $ iterations,\ and  $ 3\tilde{\sigma}_{k} $ is the amount by which the domain shrinks in the $ k $-th step;

 \item[(3)] Given $ 0 < r_{0}=r\leq 1 $,\ $r_{k+1}=r_{k}- 8\sigma_{k} = r_{k}- \frac{(k+1)^{-2}}{2\sum^{\infty}_{j=1}j^{-2}}r_0$ for $ k= 0,1,2,...$:\ $r_k$ measures the size of the analyticity domain in the angular variables after $ k $ iterations,\ and  $ 8\sigma_{k} $ is the amount by which the domain shrinks in the $ k $-th step;

 \item[(4)]$ \mathbb{L}^{k+1} =\{\frac{1+\beta}{5}|\ln\epsilon_{k}|\}^{1/1+\alpha}$, $ \mathbb{L}^{0} = 1$:\ $ \mathbb{L}^{k+1} $ determines the size of the region we must consider at the $ k$-th iterative step;

 \item[(5)]$ M^{k+1} = 2 (\sigma_{k})^{-1} |\ln\epsilon_k|$:\ $  M^{k+1} $ determines the number of Fourier coefficients we must consider at the $ k$-th step of the iterations;

\item[(6)]$ \mathcal{R}^{k}$ : at each stage of iterative process we are forced to exclude a small set of the resonant frequency $ \omega$,\ $ \mathcal{R}^{k}$ is a set of the resonant frequencies remaining after $ k$ iterations ($ \mathcal{R}^{0}= \mathbb{R}^{\mathbb{Z}}$);

 \item[(7)]$ W_{k} = diag(\tilde{\sigma}_{k}^{-1}I_{\Lambda},{\sigma}_{k}^{-1}I_{\Lambda})$:\ $ W_{k}$ is the corresponding weight matrices;

 \item[(8)]$ \mathcal{D}_{k} = \mathcal{D}_{s_k,r_{k}}$:\ $ \mathcal{D}_{k}$ is the $k$-th complex domain.
 \end{itemize}

\begin{lemma}
(Iterative lemma)\ There exists $ \epsilon_{0}> 0$ such that if $ \epsilon < \epsilon_{0}$,\ then,\ for every $ k\geq 0$,\ there exists a canonical transformation,\ $ \Psi^{k}$,\ which is analytic and invertible on $  \mathcal{D}_{k}$  and maps this set into $ \mathcal{D}_{0} $.\ The Hamiltonian $ H_{k} = H_{0}\circ \Psi^{k}$ has the form
\begin{eqnarray}
H_{k} = N_{k} + R_{k}
\end{eqnarray}
where
\begin{eqnarray}
\nonumber N_{k}
&=&  e_{k} + \langle \omega(k),\rho(k)\rangle+ \frac{1}{2}\langle \Omega^{k}\rho(k),\rho(k)\rangle+  \sum_{|j|\geq \mathbb{L}^{k}}\omega_{j}\rho_{j} + \frac{1}{2}\sum_{|j|\geq \mathbb{L}^{k}}\Omega_{j}\rho^{2}_{j},
\end{eqnarray}
with $\Omega^{k}=
\Omega(k)+ \sum^{k-1}_{s=0}\hat{\Omega}^{s}= \Omega(k)+\sum^{k-1}_{s=0} [P_{s}^{2}] +\epsilon [Q_{s}^{2}]+ [U_{s}] $,\\
and
\begin{eqnarray}
\nonumber
{R}_{k} &=& P_{k}(\rho,\theta) + V_{k}(\rho)
+ \epsilon\sum_{\lfloor i,j\rceil|dist(\lfloor i,j\rceil,0)\geq{\mathbb{L}^{k}}}f_{\lfloor i,j\rceil}(I^{0}_{i}+\rho_{i},I^{0}_{j}+\rho_{j},\theta_{i},\theta_{j})
\end{eqnarray}
with $P_{k} $ depending only on $(\rho_{j},\theta_{j})$ for $ |j|\leq{\mathbb{L}^{k}}$ and $ V_{k}(\rho)= \sum_{\lfloor i,j\rceil|\ |j|\geq{\mathbb{L}^{k}}} V_{\lfloor i,j\rceil}(\rho)$.\\
Rewrite $ R_{k}$ into the following form
\begin{eqnarray}
R_{k} = P_{k} + \tilde{V}_{k}(\rho)+ \bar{V}_{k}(\rho)+\check{V}_{k}(\rho) +  R_{k,1} + R_{k,2},
\end{eqnarray}
where
\begin{eqnarray}
\nonumber
\tilde{V}_{k}(\rho) &=& \sum_{\lfloor i,j\rceil|\mathbb{L}^{k}\leq|i|,|j|\leq\mathbb{L}^{k+1}-1}V_{\lfloor i,j\rceil}(\rho),\\
\nonumber
\bar{V}_{k}(\rho) &=& \sum_{\lfloor i,j\rceil|\ |j|=\mathbb{L}^{k+1}}V_{\lfloor i,j\rceil}(\rho),\\
\nonumber
\check{V}_{k}(\rho) &=& \sum_{\lfloor i,j\rceil|\ |i|,|j|\geq\mathbb{L}^{k+1}}V_{\lfloor i,j\rceil}(\rho),
\end{eqnarray}
and
\begin{eqnarray}
\nonumber R_{k,1}&= & \epsilon\sum_{\lfloor i,j\rceil|\mathbb{L}^{k}\leq dist(\lfloor i,j\rceil,0)\leq\mathbb{L}^{k+1}-1}f_{\lfloor i,j\rceil}(I^{0}+\rho,\theta),\\
\nonumber R_{k,2}&=& \epsilon\sum_{\lfloor i,j\rceil|dist(\lfloor i,j\rceil,0)\geq\mathbb{L}^{k+1}}f_{\lfloor i,j\rceil}(I^{0}+\rho,\theta).
\end{eqnarray}
Note
\begin{eqnarray}\label{maineq1*k}
 R_{k,1} &=& \epsilon Q_{k}(\rho,\theta)+ \epsilon\sum_{\mbox{either}\ |i|\ \mbox{or}\ |j|\ \geq\ \mathbb{L}^{k+1}\ \mbox{but not both}}f_{\lfloor i,j\rceil}(\rho,\theta),
\end{eqnarray}
with
\begin{eqnarray}
\nonumber Q_{k} &=& \sum_{\lfloor i,j\rceil|\mathbb{L}^{k}\leq |i|,|j|\leq \mathbb{L}^{k+1}-1}f_{\lfloor i,j\rceil}(I^{0}_{i}+\rho_i,I^0_{j}+\rho_j,\theta_{i},\theta_{j}).
\end{eqnarray}
Suppose $ P_{k}^{low}(\rho,\theta)$ satisfies the smallness assumption
\begin{eqnarray}
\nonumber |P^{low}_{k}|_{s_{k},r_{k}}\lessdot \epsilon_{k},
\end{eqnarray}
and $ P_{k}^{high}(\rho,\theta)$ satisfies
\begin{eqnarray}
\nonumber |P^{high}_{k}|_{s_{k},r_{k}}\lessdot 1+\sum^{k-1}_{i=0}\epsilon^{\frac{1+\beta}{2}}_{i}.
\end{eqnarray}
Then for $ \omega = \omega(k+1)\in (\mathcal{A}_{1})$,\ the $ k$-th homological equation
\begin{eqnarray}\label{N2K}
\{{N}_{k},{F_{k}}\}&+&\Gamma P_{k}^{low}+\epsilon \Gamma Q_{k}^{low}+ \{\tilde{V}_{k}+\Gamma P_{k}^{high}+\epsilon \Gamma Q_{k}^{high},F_{k}\}^{low}\\
\nonumber&=& [P_{k}^{0}]+\epsilon [Q_{k}^{0}]+ [P_{k}^{1}]+ \epsilon [Q_{k}^{1}] + [P_{k}^{2}]+ \epsilon [Q_{k}^{2}] + [U_{k}],
\end{eqnarray}
with $ \langle U_{k}\rho,\rho\rangle=
\{\tilde{V}_{k}+\Gamma P_{k}^{high}+\epsilon \Gamma Q_{k}^{high},F_{k}\}^{low}$,\
has a solution $ F_{k} =\tilde{F}_{k} + \langle a^{k},\theta\rangle=  F^{0}_{k}+ F^{1}_{k} + F^{2}_{k}+\langle a^{k},\theta\rangle $ with the estimates
\begin{eqnarray}
\label{1050001}\|a^{k} \|\lessdot \epsilon^{\frac{3}{5}-\frac{1}{5}\beta-\gamma}_{k},
\end{eqnarray}
and
\begin{eqnarray}
\label{050001} | \tilde{F}_{k} |_{s_{k+1},r_{k+1}}\lessdot \epsilon_{k}^{\frac{11}{20}}.
\end{eqnarray}
Moreover,\
\begin{eqnarray}
H_{k+1} = N_{k+1} + R_{k+1},
\end{eqnarray}
where
\begin{eqnarray}
\nonumber N_{k+1} &=&  e_{k+1} + \langle\omega(k+1),\rho(k+1)\rangle + \frac{1}{2}\langle\Omega^{k+1}\rho(k+1),\rho(k+1)\rangle\\
\nonumber &&+ \sum_{|j|\geq \mathbb{L}^{k+1}}\omega_{j}\rho_{j} + \frac{1}{2}\sum_{|i|\ \mbox{or}\ |j|\geq \mathbb{L}^{k+1}}\Omega_{ij}\rho_{i}\rho_{j},
\end{eqnarray}
with $ {\Omega}^{k+1}= \Omega(k+1) + \sum^{k}_{s=0}\hat{\Omega}^{s}= \Omega(k+1) + \sum^{k}_{s=0}[P_{s}^{2}]+\epsilon [Q_{s}^{2}]+ [U_{s}]$,\\
and
\begin{eqnarray}
\nonumber
{R}_{k+1} &=& V_{k+1}(\rho)+ P_{k+1}(\rho,\theta) \\
\nonumber&&+ \epsilon\sum_{\lfloor i,j\rceil|dist(\lfloor i,j\rceil,0)\geq{\mathbb{L}^{k+1}}}f_{\lfloor i,j\rceil}(I^0_{i}+\rho_i,I^0_{j}+\rho_j,\theta_{i},\theta_{j}),
\end{eqnarray}
in which $ V_{k+1}(\rho)= \sum_{\lfloor i,j\rceil|\ |j|\geq{\mathbb{L}^{k+1}}} V_{\lfloor i,j\rceil}(\rho)$,\ with the following estimates hold:\\
(1) The symplectic map $ \Psi^{k+1} = \Psi^{k}\circ\Psi_{k}$ satisfies
\begin{eqnarray}
\label{200} |W_{0}(\Psi^{k+1}-\Psi^{k})| \lessdot \epsilon_{k}^{\frac{17}{50}}.
\end{eqnarray}
(2) The $ (2\mathbb{L}^{k+1}-1)\times (2\mathbb{L}^{k+1}-1)$ matrix $\Omega^{k+1} $,\ the relative operator $ \bar{\Omega}^{k+1}: \mathbb{C}^{\mathbb{Z}}\rightarrow \mathbb{C}^{\mathbb{Z}} $ respectively satisfy
\begin{eqnarray}
\label{201}|\Omega^{k+1}| \lessdot \kappa_{1}(1+ 2\kappa_{2}\sum^{k}_{s=0}\epsilon^{\frac{8}{15}}_{s}), |||\bar{\Omega}^{k+1} |||\lessdot \kappa_{1}+ \sum^{k}_{s=0}\epsilon^{\frac{4}{15}}_{s},
\end{eqnarray}
and its inverse matrix $(\Omega^{k+1})^{-1}$,\ inverse operator $ (\bar{\Omega}^{k+1})^{-1}: \mathbb{C}^{\mathbb{Z}}\rightarrow \mathbb{C}^{\mathbb{Z}}$ respectively satisfy
\begin{eqnarray}
\label{202}
|(\Omega^{k+1})^{-1}|\lessdot \frac{{\kappa_{2}}}{1-{2\kappa_{2}}\sum^{k}_{i=0}\epsilon^{\frac{8}{15}}_{i}}, |||(\bar{\Omega}^{k+1})^{-1}|||\lessdot \frac{{\kappa_{2}}}{1-{2\kappa_{2}}\sum^{k}_{i=0}\epsilon^{\frac{4}{15}}_{i}}.
\end{eqnarray}
(3) The perturbation $ P^{low}_{k+1} $ satisfies
\begin{eqnarray}
\label{203} |P^{low}_{k+1}|_{s_{k+1},r_{k+1}} \lessdot \epsilon_{k+1},
\end{eqnarray}
and $ P_{k+1}^{high}(\rho,\theta)$ satisfies
\begin{eqnarray}
\label{204} |P^{high}_{k+1}|_{s_{k+1},r_{k+1}} \lessdot 1+\sum^{k}_{i=0}\epsilon^{\frac{1+\beta}{2}}_{i}.
\end{eqnarray}
\end{lemma}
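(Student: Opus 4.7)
The plan is to proceed by induction on $k$, using the one-step analysis of Section 2 and Section 3 as the inductive step with all parameters replaced by their level-$k$ analogues: $(\epsilon, s, r, \sigma, \tilde{\sigma}, \mathbb{L}^+, M^+)$ becomes $(\epsilon_k, s_k, r_k, \sigma_k, \tilde{\sigma}_k, \mathbb{L}^{k+1}, M^{k+1})$. The base case $k=0$ is exactly the content of Section 3. For the inductive step, one first verifies that under the hypothesis $R_k$ admits the same structural decomposition as $R$ at step $0$: low- and high-order parts in $\rho$ of the diagonal block $P_k$, the next-shell block $\epsilon Q_k$, the terms straddling the boundary $|j|=\mathbb{L}^{k+1}$, a far tail, and the potential pieces $\tilde V_k, \bar V_k, \check V_k$. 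Because $P_k$ and the truncated part of $Q_k$ involve only indices in the finite box $|j|\le \mathbb{L}^{k+1}-1$, the whole algebra takes place inside the finite-dimensional section where the Diophantine condition $(\mathcal{A}_1)$ at level $k+1$ is available.

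Next, I would solve the $k$-th homological equation (\ref{N2K}) following the blueprint of Lemma \ref{N8}. Splitting the equation by degree in $\rho$ yields the three scalar equations (\ref{**}) at level $k$. One chooses $a^k = (\tilde\Omega(k+1))^{-1}([P_k^1]+\epsilon[Q_k^1])$ to enforce frequency preservation and then inverts $\partial_{\omega}$ in turn on $F_k^0, F_k^1, F_k^2$; the inductive bounds (\ref{201})--(\ref{202}) keep $|(\tilde\Omega(k+1))^{-1}|$ uniformly controlled. Estimates (\ref{1050001}) and (\ref{050001}) then follow by repeating the chain of inequalities in Lemmas \ref{N7} and \ref{N8} with $\epsilon$ replaced by $\epsilon_k$; the specific exponents $\tfrac{11}{20}$ and $\tfrac{3}{5}-\tfrac{1}{5}\beta-\gamma$ come out by substituting $\mathbb{L}^{k+1} = (\tfrac{1+\beta}{5}|\ln\epsilon_k|)^{1/(1+\alpha)}$ together with the polynomial schedule $\sigma_k, \tilde\sigma_k \sim (k+1)^{-2}$, and by invoking $\gamma < \tfrac{1}{301}$.

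Setting $\Psi_k = X_{F_k}^1$ and $\Psi^{k+1} = \Psi^k \circ \Psi_k$, the bound (\ref{200}) is the level-$k$ version of (\ref{050002*}) composed with the derivative bound for $\Psi^k$ inherited from the previous step. The new normal form is $N_{k+1} = N_k + [P_k^0] + \epsilon[Q_k^0] + \langle\omega(k+1),a^k\rangle + \langle\hat\Omega^k\rho,\rho\rangle$, where $\hat\Omega^k := [P_k^2] + \epsilon[Q_k^2] + [U_k]$ satisfies $|\hat\Omega^k| \lessdot \epsilon_k^{2/3}$ by the argument that produced (\ref{036}) at level $0$. Since $\Omega^{k+1} = \Omega(k+1) + \sum_{s=0}^k \hat\Omega^s$, the matrix and operator bounds (\ref{201}) and (\ref{202}) follow by repeating the Neumann-series reasoning of Section 3.1, with the single perturbation $\hat\Omega$ replaced by the telescoped sum $\sum_{s=0}^k \hat\Omega^s$; this sum converges geometrically because $\epsilon_{s+1} = \epsilon_s^{1+\beta}$, so the series $\sum_s \epsilon_s^{8/15}$ and $\sum_s \epsilon_s^{4/15}$ appearing in the bounds are finite.

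Finally, (\ref{203}) and (\ref{204}) are obtained by reading the four blocks of estimates (\ref{022})--(\ref{025}) with $\epsilon \to \epsilon_k$: the low-order part sums to $|P_{k+1}^{low}| \lessdot \epsilon_k^{1+\beta} = \epsilon_{k+1}$, while the high-order part picks up only a bounded additive increment of size $\epsilon_k^{(1+\beta)/2}$ per iteration. The main technical obstacle I expect is controlling the compounding factor $\tilde\sigma_k^{-1}\sigma_k^{-6\mathbb{L}^{k+1}}$ that appears in every estimate for $F_k$ and $X_{F_k}$: since $\mathbb{L}^{k+1}$ grows only as $(\log 1/\epsilon_k)^{1/(1+\alpha)}$ while $\epsilon_k$ decays super-exponentially, this factor is still dominated by any fixed positive power of $\epsilon_k$, and the precise bookkeeping forces exactly the gap $\gamma < \tfrac{1}{301}$ and the split between $\tfrac{4}{5}-3\gamma-\tfrac{\beta}{5}$ and $\tfrac{17}{50}$. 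A secondary point is to verify that $\Psi_k$ genuinely maps $\mathcal{D}_{k+1}$ into $\mathcal{D}_k$ rather than merely giving pointwise smallness, which follows from (\ref{200}) and the geometric choice of $s_k, r_k$.
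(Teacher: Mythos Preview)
Your proposal is correct and mirrors the paper's own four-step proof: solve the level-$k$ homological equations (\ref{K**}), estimate the flow $\Psi_k$ and the composition $\Psi^{k+1}$, control $\Omega^{k+1}$ and its inverse by a Neumann-series argument, and then bound $P_{k+1}^{low}$, $P_{k+1}^{high}$ by repeating the estimates (\ref{022})--(\ref{025}) with $\epsilon\to\epsilon_k$. One small correction: the bound $|\hat\Omega^k|\lessdot\epsilon_k^{2/3}$ you quote from level $0$ does not carry over verbatim, because at step $k$ the matrix $[P_k^2]$ must be extracted from the scalar bound $|P_k^{low}|_{s_k,r_k}\lessdot\epsilon_k$ by a Cauchy estimate costing a factor $e^{2(\mathbb{L}^k)^{1+\alpha}}\sim\epsilon_k^{-2/5}$; the paper accordingly obtains only $|\hat\Omega^k|\lessdot\epsilon_k^{3/5-\gamma}+\sigma_k^{-2}\epsilon_k^{4/5-2\gamma}\lessdot\epsilon_k^{8/15}$, which is exactly the exponent that appears in (\ref{201})--(\ref{202}).
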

\begin{proof}
First of all,\ distinguishing terms of (\ref{N2K}) by the order of $ \rho $,\ we have
\begin{equation}\label{K**}
\begin{cases}
-\partial_{\omega}F_{k}^{0}-\langle \omega(k+1), a^{k}\rangle + \Gamma{P_{k}}^{0}+ \epsilon\Gamma{Q_{k}}^{0}=0,\\
-\partial_{\omega}F_{k}^{1}-\tilde{\Omega}^{k} a^{k} -\tilde{\Omega}^{k}\partial_{\theta}F_{k}^{0}+\Gamma{P_{k}}^{1}+ \epsilon\Gamma{Q_{k}}^{1}=0,\\
-\partial_{\omega}F_{k}^{2}-\tilde{\Omega}^{k}\partial_{\theta}F_{k}^{1}+ U_{k} +\Gamma{P_{k}}^{2}+ \epsilon\Gamma{Q_{k}}^{2}=0,
\end{cases}
\end{equation}
where $ \tilde{\Omega}^{k} = {\Omega}(k+1) + \sum^{k-1}_{s=0} \hat{\Omega}^{s}$.

Now we will prove the Lemma by the following steps.

$\mathbf{Step\ 1.}$ $ \mathbf{ Proof\  of\  (\ref{050001})}$.\
We consider the first equation of (\ref{K**}).\ Since $ \omega $ satisfies the Diophantine condition A(1),\ we can solve the first equation of (\ref{K**})
\begin{eqnarray}
\partial_{\omega}F_{k}^{0}= \Gamma{P}_{k}^{0}+ \epsilon\Gamma{Q}_{k}^{0}-([{P}_{k}^{0}] + \epsilon[{Q}_{k}^{0}] +\langle \omega, a^{k}\rangle ),
\end{eqnarray}
then the solution of the homological equation is given by
\begin{eqnarray}
\label{2009}F_{k}^{0}(\theta)&=&\sum_{0< |v| < M^{k+1}} \frac{\hat{P}_{k,v}^{0}+\epsilon \hat{Q}_{k,v}^{0}}{\mathbf{i}\langle \omega(k+1),v\rangle}e^{\mathbf{i}\langle v,\theta\rangle}.
\end{eqnarray}
From Diophantine condition $(\mathcal{A}_{1})$,\ we have
\begin{eqnarray}
\nonumber|F_{k}^{0}(\theta)|_{r_{k}-\sigma_{k}} &\lessdot& \epsilon_{k}^{-\gamma}\sum_{0< |v| < M^{k+1}}\left(\epsilon_{k} +\epsilon \underset{\mathbb{L}^{k}\leq|j|\leq \mathbb{L}^{k+1}-1}{\sum}|I^{0}_{|j|-1}|^{5}\right) e^{-r_{k}|v|}e^{(r_{k}-\sigma_{k})|v|}\\
\nonumber &\lessdot& \sigma_{k}^{-1}(2M^{k+1})^{2\mathbb{L}^{k+1}}\epsilon_{k}^{1-\gamma}\\
\label{3009} &\lessdot& \sigma_{k}^{-1}\epsilon_{k}^{1-2\gamma},\  0 < \sigma_{k} < r_{k}.
\end{eqnarray}
From Cauchy estimate,\ one has
\[
|\partial_{\theta}F_{k}^{0}(\theta)|_{r_{k}-2\sigma_{k}} \lessdot \frac{1}{\sigma_{k}}|F_{k}^{0}(\theta)|_{r_{k}-\sigma_{k}},
\]
which together with (\ref{3009}) implies
\begin{eqnarray}
\label{2010}|\partial_{\theta}F_{k}^{0}(\theta)|_{r_{k}-2\sigma_{k}}  &\lessdot& \frac{\epsilon_{k}^{1-2\gamma}}{\sigma_{k}^{2}}.
\end{eqnarray}
We next consider the second equation of (\ref{K**}).\\
Since $ \widehat{F}_{k,0}^{1}=0 $,\ we can choose a vector $ a^{k} $ such that $ (\tilde{\Omega}^{k})a^{k} -[{P_{k}}^{1}]-\epsilon [{Q_{k}}^{1}]=0 $.\ Then we have
\begin{equation}\label{012*}
a^{k} = (\tilde{\Omega}^{k})^{-1}([{P}_{k}^{1}] +\epsilon [{Q}_{k}^{1}]),
\end{equation}
with the estimate
\begin{eqnarray}
\nonumber |a^{k}| &=& |(\tilde{\Omega}^{k})^{-1}||[{P}_{k}^{1}] +\epsilon [{Q}_{k}^{1}]|\\
\nonumber &\leq& |{\Omega}^{-1}(k+1)| |(E+ \sum^{k-1}_{s=0} \hat{\Omega}^{s})^{-1}|(|[{P}_{k}^{1}]| +\epsilon |[{Q}_{k}^{1}]|)\\
\nonumber &\lessdot& \epsilon^{\frac{4}{5}}_{k}.
\end{eqnarray}
Let $ \tilde{F}_{k}^{0}= -\tilde{\Omega}\cdot\partial_{\theta}F_{k}^{0}$.\ We get
\[
|\tilde{F}_{k}^{0}(\theta)|_{r_{k}-2\sigma_{k}}\leq |\tilde{\Omega}||\partial_{\theta}F_{k}^{0}(\theta)|_{r_{k}-2\sigma_{k}}\lessdot \frac{1}{\sigma_{k}^{2}}\epsilon_{k}^{1-2\gamma}.
\]
Similarly,\ we can solve the second equation (\ref{K**})
\begin{eqnarray}
\label{010*}F_{k}^{1}(\theta)&=& \sum_{0< |v| < M^{k+1}}\frac{\widehat{P}_{k,v}^{1}+\epsilon \widehat{Q}_{k,v}^{1}+\widehat{\tilde{F}}_{k,v}^{0}}
{\mathbf{i}\langle \omega(k+1),v\rangle}e^{\mathbf{i}\langle v,\theta\rangle}.
\end{eqnarray}
From (\ref{2010}),\ we then obtain the estimate
\begin{eqnarray}
\nonumber&&|F_{k}^{1}(\theta)|_{r_{k}-3\sigma_{k}}\\
 \nonumber &\lessdot& \epsilon_{k}^{-\gamma}\sum_{0< |v| < M^{k+1}}\left(\epsilon^{\frac{4}{5}}_{k}+ \epsilon e^{-4(\mathbb{L}^{k}-1)^{1+\alpha}}  +\frac{\kappa_{2}}{\sigma_{k}^{2}}\epsilon_{k}^{1-2\gamma}\right) e^{-(r-2\sigma_{k})|v|}e^{(r-3\sigma_{k})|v|}\\
\nonumber&\lessdot& \epsilon_{k}^{-\gamma}\sum_{0< |v| < M^{k+1}}\left(\epsilon^{\frac{4}{5}}_{k}+\frac{\kappa_{2}}{\sigma_{k}^{2}}\epsilon_{k}^{1-2\gamma}\right) e^{-(r_{k}-2\sigma_{k})|v|}e^{(r_{k}-3\sigma_{k})|v|}\\
\nonumber&\lessdot& \frac{1}{\sigma_{k}^{3}}\epsilon_{k}^{\frac{4}{5}-4\gamma},
\end{eqnarray}
and
\begin{eqnarray}
\label{1011*}|\partial_{\theta}F_{k}^{1}(\theta)|_{r_{k}-4\sigma_{k}}\lessdot \frac{1}{\sigma_{k}^{4}}\epsilon_{k}^{\frac{4}{5}-4\gamma}.
\end{eqnarray}
Now we consider the third equation of (\ref{K**}).\\
Let $  \tilde{F}_{k}^{1}= -\tilde{\Omega}^{k}\partial_{\theta}F_{_{k}}^{1}$.\ We have
\[
|\widetilde{F}_{k}^{1}(\theta)|_{r_{k}-4\sigma_{k}}\leq |\tilde{\Omega}^{k }||\partial_{\theta}F_{k}^{1}(\theta)|_{r_{k}-4\sigma_{k}}\lessdot \frac{1}{\sigma_{k}^{4}}\epsilon_{k}^{\frac{4}{5}-4\gamma}.
\]
On $ \mathcal{D}_{s_{k}-\tilde{\sigma}_{k},r_{k}-4\sigma_{k}}$.\\
Since
\[
\langle U_{k}\rho,\rho\rangle = \{\tilde{V}_{k}^{3} + \Gamma P_{k}^{3}+\epsilon \Gamma Q_{k}^{3},F_{k}^{0}+ \langle a^{k},\theta\rangle\},
\]
and
\begin{eqnarray}
\nonumber \{\tilde{V}_{k}^{3},F_{k}^{0}+ \langle a^{k},\theta\rangle\} &=& \sum_{i,j,l}\frac{\partial^{3}\tilde{V}_{k}^{3}}{\partial \rho_{i}\partial \rho_{j}\partial \rho_{l}}(\partial_{\theta_{j}}F_{k}^{0}+ a^{k}_{j})\rho_{i}\rho_{j},
\end{eqnarray}
we easily have
\begin{eqnarray}
\nonumber |\{\tilde{V}_{k}^{3},F_{k}^{0}+ \langle a^{k},\theta\rangle\}^{\rho\rho}| &\leq& \sup_{j}\left(\left|\frac{\partial^{3}\tilde{V}_{k}^{3}}{\partial \rho_{j-1}^{2}\partial \rho_{j}}\right|+\left|\frac{\partial^{3}\tilde{V}_{k}^{3}}{\partial \rho_{j}^{3}}\right|+
\left|\frac{\partial^{3}\tilde{V}_{k}^{3}}{\partial \rho_{j+1}^{2}\partial \rho_{j}}\right|\right)
(|\partial_{\theta}F_{k}^{0}|+ |a^{k}|),\\
\nonumber &\lessdot & |\partial_{\theta}F_{k}^{0}|+ |a^{k}|,
\end{eqnarray}
and
\begin{eqnarray}
\nonumber &&|\{\Gamma P_{k}^{3}+\epsilon \Gamma Q_{k}^{3},F_{k}^{0}+ \langle a^{k},\theta\rangle\}^{\rho\rho}|\\
\nonumber &=& \max_i\sum_{|j|\leq \mathbb{L}^{k+1}-1}\left|\sum_{l}\frac{\partial^{3}(\Gamma P_{k}^{3}+\epsilon \Gamma Q_{k}^{3})}{\partial\rho_{i}\partial\rho_{j}\partial \rho_{l}}(\partial_{\theta_{l}}F_{k}^{0}+ a^{k}_{l})\right|\\
\nonumber &\lessdot& \left(\sum_{|j|\leq\mathbb{L}^{k}} \epsilon^{\frac{2}{5}}_{k}+\sum_{ \mathbb{L}^{k}\leq|j|\leq \mathbb{L}^{k+1}-1}\epsilon|I^{0}_{|j|-1}|^{2}\right)(|\partial_{\theta}F_{k}^{0}|+ |a^{k}|)\\
\nonumber &\lessdot& |\partial_{\theta}F_{k}^{0}|+ |a^{k}|.
\end{eqnarray}
One then obtains
\begin{eqnarray}
\label{037}|U_{k}|_{r_{k}-2\sigma_{k}} &\lessdot& \frac{1}{\sigma_{k}^{2}}\epsilon_{k}^{\frac{4}{5}-2\gamma}.
\end{eqnarray}
Therefore,\ we can solve the third equation of (\ref{K**})
\begin{eqnarray}
\nonumber F_{k}^{2}(\theta)&=& \sum_{0< |v| < M^{k+1}}\frac{\widehat{P}_{k,v}^{2}+\epsilon \widehat{Q}_{k,v}^{2}+\widehat{\tilde{F}}_{k,v}^{1}+\widehat{U}_{k,v}}{\mathbf{i}\langle {\omega(k+1),v\rangle}}e^{\mathbf{i}\langle v,\theta\rangle},
\end{eqnarray}
and we also obtain
\begin{eqnarray}
\nonumber&&|F_{k}^{2}(\theta)|_{r_{k}-5\sigma_{k}}\\
 \nonumber &\lessdot& \epsilon_{k}^{-\gamma}\sum_{0< |v| < M^{k+1}}\left(\sum_{|j|\leq \mathbb{L}^{k}}\epsilon^{\frac{3}{5}}_{k}+ \epsilon e^{-3(\mathbb{L}^{k}-1)^{1+\alpha}}  +\frac{\kappa_{2}}{\sigma_{k}^{4}}\epsilon^{\frac{4}{5}-4\gamma}\right) e^{-(r-2\sigma)|v|}e^{(r-3\sigma)|v|}\\
\nonumber&\lessdot& \epsilon_{k}^{-\gamma}\sum_{0< |v| < M^{k+1}}\left(\epsilon^{\frac{3}{5}-\gamma}_{k}+\frac{\kappa_{2}}{\sigma_{k}^{4}}
\epsilon_{k}^{\frac{4}{5}-4\gamma}\right) e^{-(r_{k}-4\sigma_{k})|v|}e^{(r_{k}-5\sigma_{k})|v|}\\
\nonumber&\lessdot& \frac{1}{\sigma_{k}^{5}}\epsilon_{k}^{\frac{3}{5}-6\gamma}.
\end{eqnarray}
Consequently,\ one obtains
\begin{eqnarray}
\nonumber &&|\tilde{F}_{k}|_{s^{k},r_{k}-5\sigma_{k}}\\
 \nonumber&\leq& |F_{k}^{0}|_{r_{k}-5\sigma_{k}}+|F_{k}^{1}|_{r_{k}-5\sigma_{k}}\left(\underset{|j|\leq \mathbb{L}^{k+1}-1}{\sum}|\rho_{j}|\right)+|F^{2}|_{r_{k}-5\sigma_{k}}\left(\underset{|i|,|j|\leq \mathbb{L}^{k+1}-1}{\sum}|\rho_{i}||\rho_{j}|\right)\\
\nonumber&\lessdot& \sigma_{k}^{-1}\epsilon_{k}^{1-2\gamma}+ \frac{1}{\sigma_{k}^{3}}\epsilon_{k}^{\frac{4}{5}-4\gamma}\left(\underset{|j|\leq \mathbb{L}^{k+1}-1}{\sum}e^{-|j|^{1+\alpha}}\right)+ \frac{1}{\sigma_{k}^{5}}\epsilon_{k}^{\frac{3}{5}-6\gamma}\left(\underset{|j|\leq \mathbb{L}^{k+1}-1}{\sum}e^{-|j|^{1+\alpha}}\right)^{2}\\
\nonumber&\lessdot& \frac{1}{\sigma_{k}^{5}}\epsilon_{k}^{\frac{3}{5}-6\gamma}\\
\nonumber&\lessdot& \epsilon_{k}^{\frac{11}{20}}.
\end{eqnarray}

$\mathbf{Step\ 2.}$ $ \mathbf{ Proof\  of\  (\ref{200})}$.\
From the equation of motion
\begin{eqnarray}
\dot{\theta}= F_{k,\rho}(\rho,\theta), \dot{\rho}= F_{k,\theta}(\rho,\theta)= \tilde{F}_{k,\theta}(\rho,\theta)+ a^{k},
\end{eqnarray}
vector $ a^{k} $ has to belong to $ \mathbb{C}^{\mathbb{Z}}$.\\
Recall that
\[
(\tilde{\Omega}^{k})^{-1}= (\Omega(k+1)+ \sum^{k-1}_{s=0}\hat{\Omega}^{s})^{-1} = (E+({\Omega}^{k+1})^{-1} \sum^{k-1}_{s=0}\hat{\Omega}^{s})^{-1}\Omega^{-1}(k+1),
\]
Thus,\ one has
\[
 |(\tilde{\Omega}^{k})^{-1}| \leq 4\kappa_{2}
\]
and
\begin{eqnarray}
\label{1012*}\| a^{k}\|&=& \sum_{|j|\leq \mathbb{L}^{k+1}-1}|\sum_{|i|\leq \mathbb{L}^{k+1}-1}({\Omega}^{k}_{ij})^{-1}( [{P_{k}}^{1}]^{\rho_{i}}+\epsilon [{Q_{k}}^{1}]^{\rho_{i}})|e^{|j|^{1+\alpha}}\\
\nonumber &\leq&\sum_{|i|,|j|\leq \mathbb{L}^{k}}|(\tilde{\Omega}^{k}_{ij})^{-1}[{P_{k}}^{1}]^{\rho_{i}}|e^{|j|^{1+\alpha}}\\
\nonumber&&+
\epsilon\sum_{\mathbb{L}^{k}\leq |i|,|j|\leq \mathbb{L}^{k+1}-1}|(\tilde{\Omega}^{k}_{ij})^{-T}[{Q_{k}}^{1}]^{\rho_{i}}|e^{|j|^{1+\alpha}}\\
\nonumber &\leq& A + B,
\end{eqnarray}
where
\begin{eqnarray}
\nonumber A &=&\sum_{|i|,|j|\leq \mathbb{L}^{k}}|(\tilde{\Omega}^{k}_{ij})^{-1}[{P_{k}}^{1}]^{\rho_{i}}|e^{|j|^{1+\alpha}},\\
\nonumber B &=&\epsilon\sum_{\mathbb{L}^{k}\leq |i|,|j|\leq \mathbb{L}^{k+1}-1}|(\tilde{\Omega}^{k}_{ij})^{-1}[{Q_{k}}^{1}]^{\rho_{i}}|e^{|j|^{1+\alpha}}.
\end{eqnarray}
By some simple calculations,\ one gets
\begin{eqnarray}
\nonumber A &\leq& {4}{\kappa_{2}}\sum_{|j|\leq \mathbb{L}^{k}}(\epsilon^{-\frac{1+\beta}{5}}_{k-1}\epsilon_{k })e^{|j|^{1+\alpha}}\\
\nonumber&\leq& 4{\kappa_{2}}\epsilon^{\frac{3(1+\beta)}{5}}_{k-1}\left(\sum_{|j|\leq \mathbb{L}^{k}}e^{-(\mathbb{L}^{k})^{1+\alpha}}e^{|j|^{1+\alpha}}\right)\\
\nonumber &\leq& 4{\kappa_{2}}\left(2\mathbb{L}^{k}\epsilon^{\frac{3(1+\beta)}{5}}_{k-1}\right)\\
\nonumber &\lessdot& \epsilon^{\frac{3}{5}-\gamma}_{k},
\end{eqnarray}
and
\begin{eqnarray}
\nonumber B &\leq& {4\kappa_{2}\epsilon}\sum_{\mathbb{L}^{k}\leq |j|\leq \mathbb{L}^{k+1}-1}|I^{0}_{\mathbb{L}^{k}-1}|^{4}e^{|j|^{1+\alpha}}\\
\nonumber &\leq&{4\kappa_{2}\epsilon}e^{-4(\mathbb{L}^{k}-1)^{1+\alpha}+(\mathbb{L}^{k+1}-1)^{1+\alpha} }\sum_{\mathbb{L}^{k}\leq |j|\leq \mathbb{L}^{k+1}-1}e^{-(\mathbb{L}^{k+1}-1)^{1+\alpha}+|j|^{1+\alpha}}\\
\nonumber&\leq& {4\kappa_{2}\epsilon} \epsilon^{\frac{3-\beta}{5}}_{k}\cdot (2\mathbb{L}^{k+1})\\
\nonumber &\lessdot& \epsilon^{\frac{3}{5}-\frac{1}{5}\beta-\gamma}_{k}.
\end{eqnarray}
Therefore,\ we obtain
\[
\| a^{k}\| \lessdot \epsilon^{\frac{3}{5}-\frac{1}{5}\beta-\gamma}_{k}.
\]
On $ \mathcal{D}_{s_{k},r_{k}-6\sigma_{k}}$, one also has
\begin{eqnarray}
\nonumber{\|}F_{k,\theta}{\|} &=& \underset{|j|\leq \mathbb{L}^{k+1}-1}{\sum}|\tilde{F}_{k,\theta_{j}}+a^{k}_{j}|e^{|j|^{1+\alpha}}\\
\nonumber&\leq& \underset{|j|\leq \mathbb{L}^{k+1}-1}{\sum}|\tilde{F}_{k,\theta_{j}}|e^{(\mathbb{L}^{k+1}-1)^{1+\alpha}}+ \|a^{k}\|\\
\nonumber&\lessdot& \epsilon_{k}^{-\frac{1+\beta}{5}}\sigma_{k}^{-1}|\tilde{F}_{k}|_{s_{k},r_{k}-5\sigma_{k}}+ \epsilon^{\frac{3}{5}-\frac{1}{5}\beta-\gamma}_{k}\\
\nonumber& \lessdot & \sigma_{k}^{-6}\epsilon^{\frac{2}{5}-6\gamma-\frac{1}{5}\beta}.
\end{eqnarray}
On $ \mathcal{D}_{s_{k}-\tilde{\sigma}_{k},r_{k}-5\sigma_{k}}$,\ we obtain the estimate
\begin{eqnarray}
\nonumber|F_{k,\rho}|_{\infty} &=&\sup_j|F_{k,\rho_{j}}|\\
\nonumber&\leq& \sup_j \tilde{\sigma}_{k}^{-1}e^{|j|^{1+\alpha}}|\tilde{F}_{k}|_{s_{k},r_{k}-5\sigma_{k}}\\
\nonumber&\lessdot& \epsilon_{k}^{-\frac{1+\beta}{5}}\tilde{\sigma}_{k}^{-1}\sigma_{k}^{-5}\epsilon_{k}^{\frac{3}{5}-6\gamma}\\
\nonumber& \lessdot& \tilde{\sigma}_{k}^{-1}\sigma_{k}^{-5}\epsilon_{k}^{\frac{2}{5}-6\gamma-\frac{1}{5}\beta}.
\end{eqnarray}
Recalling the estimates for $ F_{k,\rho},F_{k,\theta}$,\ we thus have
\[
\sigma_{k}^{-1}| F_{k,\rho} |_{\infty},\tilde{\sigma}_{k}^{-1}{\|}F_{k,\theta}{\|}\lessdot  \tilde{\sigma}_{k}^{-1}\sigma_{k}^{-6}\epsilon_{k}^{\frac{2}{5}-6\gamma-\frac{1}{5}\beta}
\]
uniformly on $ \mathcal{D}_{s_{k}-\tilde{\sigma}_{k},r_{k}-6\sigma_{k}}$.\\
Noting $ W_{k} =
diag(\tilde{\sigma}_{k}^{-1}I_{\Lambda},{\sigma}_{k}^{-1}I_{\Lambda})$,\ the above estimates are equivalent to
\[
|W_{k}X_{F_{k}}|_{\mathcal{P}}\lessdot \tilde{\sigma}_{k}^{-1}\sigma_{k}^{-6}\epsilon_{k}^{\frac{2}{5}-6\gamma-\frac{1}{5}\beta}
\]
on $ \mathcal{D}_{s_{k}-\tilde{\sigma}_{k},r_{k}-6\sigma_{k}}$.

Considering the Hamiltonian vector-field $X_{F_{k}} = \Psi_{k}$ associated with $ F_{k}$,\ the time-1-map can be written as
\[
\Psi_{k}:  \mathcal{D}_{s_{k}-2\tilde{\sigma}_{k},r_{k}-7\sigma_{k}} \rightarrow \mathcal{D}_{s_{k}-\tilde{\sigma}_{k},r_{k}-6\sigma_{k}},
\]
for which the estimate
\begin{eqnarray}
\label{1050002*}|W_{k}(\Psi_{k}-id)|_{\mathcal{P},\mathcal{D}_{k+1}} \lessdot \tilde{\sigma}_{k}^{-1}\sigma_{k}^{-6}\epsilon_{k}^{\frac{2}{5}-6\gamma-\frac{1}{5}\beta}
\end{eqnarray}
holds.\\
Since $ \Psi^{k+1} = \Psi^{k}\circ \Psi_{k} $ ,\ write
\begin{eqnarray}
|W_{0}(\Psi^{k+1} -\Psi^{k})|_{\mathcal{P},\mathcal{D}_{k+1}} &=& |W_{0}(\Psi^{k}\circ \Psi_{k} -\Psi^{k})|_{\mathcal{P},\mathcal{D}_{k+1}}\\
\nonumber&\leq& |W_{0}D\Psi^{k}W^{-1}_{k}|_{\mathcal{P},\mathcal{D}_{k}}|W_{k}(\Psi_{k}-id)|_{\mathcal{P},\mathcal{D}_{k+1}}.
\end{eqnarray}
By the inductive construction,\ $ \Psi^{k} = \Psi_{0}\circ\cdot\cdot\cdot\circ\Psi_{k-1}$,\ and
\begin{eqnarray}
|W_{v}D\Psi_{v}W^{-1}_{v}|_{\mathcal{P},\mathcal{D}_{v+1}}
\lessdot 1+ \epsilon_{v}^{\frac{8}{25}},\  (\mbox{in view of}\ (\ref{1050002*}))
\end{eqnarray}
thus,\ we obtain
\begin{eqnarray}
|W_{0}(\Psi^{k+1} -\Psi^{k})|_{\mathcal{P},\mathcal{D}_{k+1}} &\leq& \prod^{k-1}_{v=0}|W_{v}D\Psi_{v}W^{-1}_{v+1}|_{\mathcal{P},\mathcal{D}_{k}}
|W_{k}(\Psi_{k}-id)|_{\mathcal{P},\mathcal{D}_{k+1}}\\
\nonumber&\leq& \prod^{k-1}_{v=0}(1+ \epsilon_{v}^{\frac{8}{25}})\sigma_{k}^{-6}\epsilon_{k}^{\frac{2}{5}-6\gamma-\frac{1}{5}\beta}\\
\nonumber&\lessdot& \epsilon_{k}^{\frac{17}{50}}.
\end{eqnarray}

$\mathbf{Step\  3.}$ $ \mathbf{ Proofs\  of\  (\ref{201})\ and\ (\ref{202})}$.\ On $ \mathcal{D}_{s_{k}-\tilde{\sigma}_{k},r_{k}-2\sigma_{k}}$,\ it follows from (\ref{037}) that
\[
|\hat{\Omega}^{k}|\lessdot \epsilon^{\frac{3}{5}-\gamma}_{k} +\sigma_{k}^{-2}\epsilon_{k}^{\frac{4}{5}-2\gamma}\lessdot \epsilon_{k}^{\frac{8}{15}}.
\]
Therefore,\ since $ \Omega^{k+1} = \Omega(k+1) + \sum^{k}_{s=0}\hat{\Omega}^{s}$ for $ \hat{\Omega}^{s}= ( \hat{\Omega}_{ij})$ of $ |i|,|j|\leq \mathbb{L}^{s+1}-1$,\ the matrix ${\Omega}^{k+1}$ satisfies
\begin{eqnarray}
\nonumber|\Omega^{k+1}| &=& \left|\Omega(k+1) + \sum^{k}_{s=0}\hat{\Omega}^{s}\right|\\
\nonumber&=& |\Omega(k+1)|\left|E + \Omega^{-1}(k+1)\sum^{k}_{s=0}\hat{\Omega}^{s}\right|\\
\nonumber&\lessdot& {\kappa_{1}}({1+2{\kappa_{2}}\sum^{k}_{s=0}\epsilon^{\frac{8}{15}}_{s}}),
\end{eqnarray}
and its inverse $ ({\Omega}^{k+1})^{-1} $ satisfies
\begin{eqnarray}
\nonumber|(\Omega^{k+1})^{-1} |
&=& \left|\left(\Omega(k+1) + \sum^{k}_{s=0}\hat{\Omega}^{s}\right)^{-1}\right| \\
\nonumber&=& \left|\left(E+\Omega^{-1}(k+1)\sum^{k}_{s=0}\hat{\Omega}^{s}\right)^{-1}\Omega^{-1}(k+1)\right| \\
%\nonumber&\leq& |(E+\Omega(k+1)^{-1}\sum^{k}_{s=0}\hat{\Omega}^{s})^{-1}||\Omega(k+1)^{-1}|\\
\nonumber&\lessdot& \frac{\kappa_{2}}{1-{2\kappa_{2}}\sum^{k}_{s=0}\epsilon_{s}^{\frac{8}{15}}}.
\end{eqnarray}
Moreover,\ since $$ \bar{\Omega}^{k+1}= \begin{pmatrix}
\Omega^{k+1}&0\\
0&0
\end{pmatrix}_{\infty\times\infty},
$$
the operator $ \bar{\Omega}^{k+1} : \mathbb{C}^{\mathbb{Z}}\rightarrow \mathbb{C}^{\mathbb{Z}} $ satisfies
\begin{eqnarray}
\nonumber|||\bar{\Omega}^{k+1}||| &=& \sup_{I,\|I\|\neq0}\frac{\|\bar{\Omega}^{k+1} I\|}{\|I\|}\leq \sup_{I,\|I\|\neq0}\frac{\|\Omega(k+1)I\|+ \|\sum^{k}_{s=0}\hat{\Omega}^{s} I\|}{\|I\|}\\
\nonumber&\leq& \kappa_{1} + \sup_{I,\|I\|\neq0}\sum^{k}_{s=0}\left(\frac{\sum_{|i|,|j|\leq \mathbb{L}^{s+1}-1}|\hat{\Omega}^{s}_{ij}||I_{i}|e^{|j|^{1+\alpha}}}{\|I\|}\right),\\
\nonumber&\lessdot& \kappa_{1} + \sum^{k}_{s=0}\sup_{I,\|I\|\neq0}\left(\frac{(\underset{|j|\leq \mathbb{L}^{s+1}-1}{\sum}\epsilon_{s}^{\frac{8}{15}-\frac{1+\beta}{5}})(\underset{|i|\leq \mathbb{L}^{s+1}-1}{\sum}|I_{i}|e^{|i|^{1+\alpha}})}{\|I\|}\right)\\
\nonumber&\lessdot& \kappa_{1}+\sum^{k}_{s=0}\epsilon_{s}^{\frac{4}{15}}.
\end{eqnarray}
Similarly,\ as
$$ (\bar{\Omega}^{k+1})^{-1}= \begin{pmatrix}
(\Omega^{k+1})^{-1}&0\\
0&0
\end{pmatrix}_{\infty\times\infty},
$$
the operator $ (\bar{\Omega}^{k+1} )^{-1}: \mathbb{C}^{\mathbb{Z}}\rightarrow \mathbb{C}^{\mathbb{Z}} $ satisfies
\begin{eqnarray}
\nonumber|||(\bar{\Omega}^{k+1})^{-1}|||&=& \sup_{I,\|I\|\neq0}\frac{\|(\bar{\Omega}^{k+1})^{-1} I\|}{\|I\|}
=\sup_{\|I\|\neq0}\frac{\|{(\Omega}^{k+1})^{-1} I(k+1)\|}{\|I\|}\\
\nonumber&=& \sup_{I,\|I\|\neq0}\frac{\|(E+\Omega^{-1}(k+1)\sum^{k}_{s=0}\hat{\Omega}^{s})^{-1}\Omega^{-1}(k+1) I(k+1)\|}{\|I\|}\\
\nonumber&\leq& \sup_{I,\|I\|\neq0}\frac{\|(E+\Omega^{-1}(k+1)\sum^{k}_{s=0}\hat{\Omega}^{s})^{-1}b(k+1)\|}{\|I\|}\\
\nonumber&(& \mbox{by letting}\ b(k+1) = \Omega^{-1}(k+1) I(k+1))\\
\nonumber&\leq& \sup_{I,\|I\|\neq0}\frac{\|\sum^{\infty}_{l=0}(\Omega^{-1}(k+1)\sum^{k}_{s=0}\hat{\Omega}^{s})^{l}b(k+1)\|}{\|I\|}\\
\nonumber&\lessdot& \frac{1}{1-{2\kappa_{2}}\sum^{k}_{s=0}\epsilon_{s}^{\frac{4}{15}}}
\left(\sup_{\|I\|\neq0}\frac{\|b(k+1)\|}{\|I\|}\right)\\
\nonumber&\lessdot& \frac{{\kappa_{2}}}{1-{2\kappa_{2}}\sum^{k}_{s=0}\epsilon^{\frac{4}{15}}_{s}}.
\end{eqnarray}

$\mathbf{Step\ 4.}$ $ \mathbf{ Proofs\  of\  (\ref{203})\ and\ (\ref{204})}$.\
From (\ref{050001}),\ (\ref{200}) and accordingly  to the same process of (\ref{026}),\ (\ref{027}),\ we can obtain
\begin{eqnarray}
\label{1026}|P^{low}_{k+1}|_{s_{k}-3\tilde{\sigma}_{k},r_{k}-8\sigma_{k}}&\lessdot & \epsilon_{k}^{1+\beta}\lessdot \epsilon_{k+1},
\end{eqnarray}
and
\begin{eqnarray}
\label{1027}\left|\sum^{4}_{j\geq3} P_{k+1}^{j}\right|_{s_{k}-3\tilde{\sigma}_{k},r_{k}-8\sigma_{k}}&\lessdot& 1+\sum^{k}_{s=0}\epsilon_{k}^{\frac{1+\beta}{2}},
\end{eqnarray}
while we omit the details.
\end{proof}

\section{the measure estimate}
We construct a measure with support at the origin.\ Let
\[
d\sigma(x) = \frac{1}{\sqrt{2\pi}}\exp^{-x^{2}/2}dx
\]
be the standard gaussian measure on the real line with mean zero and variance one,\ and set
\[
d\mu(\omega) = \underset{i\in \mathbb{Z}}{\prod}\frac{1}{\sqrt{2\pi}}\exp^{-\omega_{i}^{2}/2}d\omega_{i}.
\]
Note that
\begin{eqnarray}\label{2031}
\tilde{\mathcal{N}}_{v}^{k} = \left\{ \omega: | \omega(k)\cdot v | \leq (\epsilon^{(1+\beta)^{k-1}})^{\gamma}\right\},
\end{eqnarray}
and
\begin{eqnarray}\label{2032}
\mathcal{N}^{k} = \underset{0< |v| \leq M^{k}}{\bigcup}\tilde{\mathcal{N}}_{v}^{k}
\end{eqnarray}
As in the finite-dimensional case,\ (\ref{2031}) yields
\[
\mu(\tilde{\mathcal{N}}_{v}^{k}) \leq \frac{(\epsilon^{(1+\beta)^{k-1}})^{\gamma}}{||v||_{e}}\leq C^{2\mathbb{L}^{k}}(\epsilon_{k-1})^{\gamma},
\]
where $ ||v||_{e}$ denotes the Euclidean length.\\
Since the number of $ v$ in  (\ref{2032}) is bounded by $ (2M^{k})^{2\mathbb{L}^{k}}$,\ we obtain
\begin{eqnarray}\nonumber
\mu(\mathcal{N}^{k})
\leq C^{2\mathbb{L}^{k}}\epsilon_{k-1}^{\gamma}\cdot(2M^{k})^{2\mathbb{L}^{k}}\leq (\epsilon_{k-1})^{\kappa},
\end{eqnarray}
for some $ 0 < \kappa < \gamma $.\\
Define
\[
\mathcal{R}^{k}=\mathcal{R}^{0}\setminus \underset{1\leq j \leq {k}}{\bigcup}\tilde{\mathcal{N}}^{j}.
\]
Thus, we have
\begin{eqnarray}\label{2033}
\mu(\mathcal{R}^{k})
\leq 1-\overset{k-1}{\underset{j=0}{\sum}}(\epsilon_{j})^{\kappa},
\end{eqnarray}
for some $  0 < \kappa < \gamma  $.

\section{Proof of theorem 1.1}

In this section,\ we prove the  Theorem \ref{0*} by applying Iterative lemma to the Hamiltonian system
defined in (\ref{maineq}).
\begin{proof}
Let $ \mathcal{D}_{*}=\mathcal{D}_{\frac{1}{2}s,\frac{1}{2}r} \subset \bigcap^{\infty}_{0}\mathcal{D}_{s_{k},r_{k}}$,\ $ \mathcal{R}^{\infty}=\underset{j\geq1}{\bigcap}{\mathcal{R}}^{j}$,\ $ \Omega_{*}= \Omega+  \overset{\infty}{\underset{s=0}{\sum}}\hat{\Omega}^{s}$ and $
\Psi_{*} = \prod^{\infty}_{0}\Psi_{k}$.\ By the standard argument,\ we conclude that $ \Psi_{*}, D\Psi_{*}, \Omega_{*}, H_k$ converge uniformly on the domain $\mathcal{D}_{\frac{1}{2}s,\frac{1}{2}r}$.\ Let
\begin{eqnarray}
H_{*} = N_{*} + R_{*},
\end{eqnarray}
where
\begin{eqnarray}
\nonumber N_{*}
&=& e_{*} + \langle \omega,\rho\rangle+ \frac{1}{2}\langle \Omega_{*}\rho,\rho\rangle,
\end{eqnarray}
and
\begin{eqnarray}
\nonumber
{R}_{*} &=&  P_{*}(\rho,\theta),
\end{eqnarray}
with $ P_{*} = \mathcal{O}(|\rho_{i}|^{\iota_{i}}|\rho_{j}|^{\iota_{j}}|\rho_{k}|^{\iota_{k}})$ for $ {\iota_{i}}+{\iota_{j}}+{\iota_{k}}\geq 3$.\\
Moreover,\ by the standard KAM\ proof,\ we obtain the following estimates: \\
(1) The symplectic map $ \Psi_{*} $ satisfies
\begin{eqnarray}
\label{200*} |W_{0}(\Psi_{*}-id)|_{\mathcal{P},\mathcal{D}_{*}}\lessdot \epsilon^{\frac{17}{50}}.
\end{eqnarray}
(2) The operator $ \Omega{*}: \mathbb{C}^{\mathbb{Z}}\rightarrow \mathbb{C}^{\mathbb{Z}} $ satisfies
\begin{eqnarray}
\label{201*}|||\Omega_{*}-\Omega|||\lessdot \sum^{\infty}_{s=0}\epsilon_{s}^{\frac{4}{15}}\lessdot\epsilon^{\frac{1}{5}}.
\end{eqnarray}
(3) The measure of $ \mathcal{R}^{\infty}$ satisfies
\begin{eqnarray}
\label{202*} \mu(\mathcal{R}^{\infty}) \geq 1 - \overset{\infty}{\underset{j=0}{\sum}}(\epsilon_{j})^{\kappa},\
\end{eqnarray}
for some $ 0 < \kappa < \gamma $.
\end{proof}

\footnotesize
\bibliographystyle{abbrv} % abbrv
%\bibliography{references}

\end{document}